\documentclass[a4paper,11pt]{amsart}
\usepackage{amssymb}
\usepackage{amsmath}
\usepackage{verbatim}
\usepackage{url}
\usepackage[all]{xy}


\usepackage{color}

\voffset=-15mm
\vsize=240mm 
\textheight=240mm 
\hsize=150mm 
\textwidth=150mm 
\hoffset=-10mm
\parskip=4pt
\parindent=12pt

\numberwithin{equation}{section}

\theoremstyle{plain}

\newtheorem{theorem}[equation]{Theorem}
\newtheorem{proposition}[equation]{Proposition}

\newtheorem{lemma}[equation]{Lemma}
\newtheorem{corollary}[equation]{Corollary}

\newtheorem{question}[equation]{Question}

\theoremstyle{definition}

\newtheorem{remark}[equation]{Remark}
\newtheorem{example}[equation]{Example}
\newtheorem{definition}[equation]{Definition}
\newtheorem{notation}[equation]{Notation}
\newtheorem{subsec}[equation]{}

\newcommand{\C}{{\mathbb{C}}}
\newcommand{\R}{{\mathbb{R}}}

\newcommand{\sA}{{\mathcal A}}
\newcommand{\sN}{{\mathcal{N}}}

\newcommand{\vk}{\varkappa}

\newcommand{\isoto}{\overset{\sim}{\to}}

\newcommand{\labelto}[1]{\xrightarrow{\makebox[1.5em]{\scriptsize ${#1}$}}}

\newcommand{\Spec}{{\rm Spec\,}}

\newcommand{\SAut}{{\rm SAut}}
\newcommand{\Aut}{{\rm Aut}}

\newcommand{\Gal}{{\rm Gal}}

\newcommand{\siga}{{\sigma_\gamma}}
\newcommand{\muga}{{\mu_\gamma}}

\newcommand{\ctil}{{\tilde c}}
\newcommand{\upgam}{\hs^\gamma\!}
\newcommand{\qs}{{\rm qs}}

\newcommand{\diag}{{\rm diag}}

\newcommand{\Gbar}{{\overline{G}}}
\newcommand{\Ztil}{{\widetilde{Z}}}

\newcommand{\hs}{\kern 0.75pt}

\newcommand{\emm}{\bfseries}

\newcommand{\X}{{{\sf X}}}

\newcommand{\Inn}{\mathrm{Inn}}

\newcommand{\id}{{\rm id}}

\newcommand{\Gtil}{{\widetilde{G}}}


\newcommand{\SL}{{\rm SL}}
\newcommand{\SU}{{\rm SU}}

\newcommand{\dmd}{{\text{\tiny$\diamondsuit$}}}

\newcommand{\cbar}{{c}}

\newcommand{\Tbar}{{\overline T}}
\newcommand{\Hbar}{{\overline H}}

\renewcommand{\gamma}{s}

\newcommand{\tr}{{\rm tr}}

\newcommand{\deltatil}{{\tilde \delta}}

\title[Models of $G$-varieties]%
{Existence of equivariant models of $G$-varieties}

\author{Mikhail Borovoi}

\address{Raymond and Beverly Sackler School of Mathematical Sciences,
Tel Aviv University, 6997801 Tel Aviv, Israel}

\email{borovoi@post.tau.ac.il}

\thanks{This research was partially supported by the Israel Science Foundation (grant No. 870/16)}

\keywords{Equivariant model, inner form, pure inner form, algebraic group, spherical homogeneous space}

\subjclass[2010]{%
  20G15
, 12G05
, 14M17
, 14G27
, 14M27
}

\date{\today}

\begin{document}

\begin{abstract}
Let $k_0$ be a field of characteristic 0, and let $k$ be a fixed algebraic closure of $k_0$.
Let $G$ be an algebraic $k$-group, and let $Y$ be a $G$-variety over $k$.
Let $G_0$ be a $k_0$-model ($k_0$-form) of $G$.
We ask whether $Y$ admits a $G_0$-equivariant $k_0$-model $Y_0$.

We assume that $Y$ admits a $G_\dmd$-equivariant $k_0$-model $Y_\dmd$,
where $G_\dmd$ is an inner form of $G_0$.
We give a Galois-cohomological criterion for the existence of
a $G_0$-equivariant $k_0$-model $Y_0$ of $Y$.
We apply this criterion to certain spherical homogeneous varieties $Y=G/H$.
\end{abstract}

\maketitle



\section{Introduction}
\label{s:intro}

\begin{subsec}\label{ss:intro-models}
Let $k_0$ be a field of characteristic 0,  and let $k$ be a fixed algebraic closure of $k_0$.
Set $\Gamma=\Gal(k/k_0)$.

Let $G$ be a connected algebraic  group over $k$ (not necessarily linear).
Let $Y$ be a $G$-variety, that is, an  irreducible algebraic variety over $k$
together with a morphism
\[\theta \colon G\times_k Y\to Y\]
defining an action of $G$ on $Y$.
We say that $(Y,\theta)$ is a {\em $G$-$k$-variety} or just that $Y$ is a $G$-$k$-variety.

Let $G_0$ be a {\em $k_0$-model} ($k_0$-form) of $G$, that is, an algebraic group over $k_0$
together with an isomorphism of algebraic $k$-groups
\[\nu_G\colon G_0\times_{k_0} k\isoto G.\]
By a {\em  $G_0$-equivariant $k_0$-model of the $G$-$k$-variety $(Y,\theta)$}
we mean a $G_0$-$k_0$-variety $(Y_0,\theta_0)$
together with an isomorphism $\nu_Y\colon Y_0\times_{k_0} k\isoto Y$
such that the following diagram commutes:
\begin{equation*}
\xymatrix{
G_{0,k}\times_k Y_{0,k}\ar[r]^-{\theta_{0,k}}\ar[d]_{\nu_G\times\nu_Y}   & Y_{0,k}\ar[d]^{\nu_Y} \\
G\times_k Y\ar[r]^-\theta                                                &Y
}
\end{equation*}

Inspired by the works of Akhiezer and Cupit-Foutou  \cite{ACF}, \cite{Akhiezer},
for a given $k_0$-model $G_0$  of $G$ we ask
whether there exists a $G_0$-equivariant $k_0$-model $Y_0$ of $Y$.
\end{subsec}

\begin{subsec}\label{ss:intro-twist}
With the above notation,
we consider the group $\Aut(G)$ of automorphisms of $G$.
We regard $\Aut(G)$ as an abstract group.
Any $g\in G(k)$ defines an {\em inner automorphism}
\[i_g\colon G\to G,\quad x\mapsto gxg^{-1}\text{ for }x\in G(k).\]
We obtain a homomorphism
\[i\colon G(k)\to \Aut(G).\]
We denote by $\Inn(G)\subset  \Aut(G)$ the image of the homomorphism $i$ and
we say that $\Inn(G)$ is the group of inner automorphisms of $G$.
We may identify $\Inn(G)$ with $\Gbar(k)$, where
\[\Gbar=G/Z(G)\]
and $Z(G)$ is the center of $G$.

Let $G_\dmd$ be a $k_0$-model of $G$.
We write $Z_\dmd$ for the center $Z(G_\dmd)$,
then $\Gbar_\dmd:=G_\dmd/Z_\dmd$ is a $k_0$-model of $\Gbar$.
Let $\cbar\colon \Gamma\to \Gbar_\dmd(k)$ be a {\em 1-cocycle}, that is, a locally constant map
such that the following cocycle condition is satisfied:
\begin{equation}\label{e:cocycle-cond}
\cbar_{\gamma t}=\cbar_\gamma\cdot\upgam\cbar_t\quad \text{for all }\gamma,t\in\Gamma.
\end{equation}
We denote the set of such 1-cocycles by $Z^1(\Gamma,\Gbar_\dmd(k))$ or by $Z^1(k_0,\Gbar_\dmd)$.
For $c\in Z^1(k_0,\Gbar)$ one can define the \emph{$\cbar$-twisted inner form}
$_\cbar(G_\dmd)$ of $G_\dmd$; see Subsection \ref{ss:tw} below.
For simplicity we write $_\cbar G_\dmd$ for $_\cbar(G_\dmd)$.
\end{subsec}

\begin{subsec}\label{ss:intro-qs}
It is well known that if $G$ is a connected reductive $k$-group,
then any $k_0$-model $G_0$ of $G$ is an inner form of a quasi-split model;
see e.g., Springer \cite[Proposition 16.4.9]{Springer2}.
In other words, there exist a quasi-split model $G_\qs$ of $G$
and a 1-cocycle $\cbar\in Z^1(k_0, \Gbar_\qs)$
such that $G_0=\hs_\cbar G_\qs$.
In some cases it is clear that $Y$ admits a $G_\qs$-equivariant $k_0$-model.
For example, assume that $Y=G/U$, where $U=R_u(B)$, the unipotent radical of a Borel subgroup $B$ of $G$.
Since $G_\qs$ is a \emph{quasi-split} model, there exists
a Borel subgroup $B_\qs\subset G_\qs$ (defined over $k_0$).
Set $U_\qs=R_u(B_\qs)$, then $G_\qs/U_\qs$ is a $G_\qs$-equivariant $k_0$-model of $Y=G/U$.
\end{subsec}

\begin{subsec}\label{ss:intro-we-ask}
In the setting of \ref{ss:intro-models} and \ref{ss:intro-twist}, let $G_\dmd$ be a $k_0$-model of $G$
and let $G_0=\hs_\cbar G_\dmd$, where $\cbar\in Z^1(k_0, \Gbar_\dmd)$.
Motivated by \ref{ss:intro-qs},   \emph{we assume}
that $Y$ admits a $G_\dmd$-equivariant $k_0$-model $Y_\dmd$,
and we ask whether $Y$ admits a $G_0$-equivariant $k_0$-model $Y_0$\hs.

We consider the short exact sequence
\[1\to Z_\dmd\to G_\dmd\to \Gbar_\dmd\to 1\]
and the connecting map
\[\delta\colon H^1(k_0,\Gbar_\dmd)\to H^2(k_0,Z_\dmd);\]
see Serre \cite[I.5.7, Proposition 43]{Serre}.
If $\cbar\in Z^1(k_0,\Gbar_\dmd)$, we write $[\cbar]$ for the corresponding cohomology class in $H^1(k_0,\Gbar_\dmd)$.
By abuse of notation we write $\delta[\cbar]$ for $\delta([\cbar])$.

We consider the group $\sA:=\Aut^G(Y)$ of $G$-equivariant automorphisms of $Y$,  which
we regard as an abstract group.
The $G_\dmd$-equivariant $k_0$-model $Y_\dmd$ of $Y$ defines a $\Gamma$-action on $\sA$, see Subsection \ref{ss:horo-2} below,
and we denote the obtained $\Gamma$-group by $\sA_\dmd$.
One can define the second Galois cohomology set  $H^2(\Gamma,\sA_\dmd)$.
See Springer \cite[1.14]{Springer-H2} for a definition of  $H^2(\Gamma,\sA_\dmd)$
in the case when the $\Gamma$-group $\sA_\dmd$ is nonabelian.

For $z\in Z_\dmd(k)$ we consider the $G$-equivariant automorphism
\[y\mapsto z\cdot y\colon\ Y\to Y.\]
We obtain a $\Gamma$-equivariant homomorphism
\[\vk\colon Z_\dmd(k)\to\sA_\dmd\hs,\]
which induces a map
\[\vk_*\colon H^2(k_0,Z_\dmd)\to H^2(\Gamma,\sA_\dmd).\]
\end{subsec}

\begin{theorem}[Theorem \ref{t:twist}]
\label{t:twist'}
Let $k,\ G,\ Y,\ k_0,\ G_\dmd,\ Y_\dmd,\  \sA_\dmd,\ \delta,\ \vk_*$
be as in Subsections \ref{ss:intro-models} and \ref{ss:intro-we-ask}.
In particular, we assume  that $Y$ admits a $G_\dmd$-equivariant $k_0$-model $Y_\dmd$.
We assume also that $Y$ is quasi-projective.
Let $\cbar\in Z^1(k_0,\Gbar_\dmd)$ be a 1-cocycle, and consider its class $[\cbar]\in H^1(k_0,\Gbar_\dmd)$.
Set $G_0=\hs_c G_\dmd$ (the inner twisted form of $G_\dmd$ defined by the 1-cocycle $c$).
Then the $G$-variety $Y$ admits a $G_0$-equivariant $k_0$-model if and only if the cohomology class
\[\vk_*(\delta[c])\in H^2(\Gamma, \sA_\dmd)\]
is neutral.
\end{theorem}

\begin{remark}
In the case when $\sA_\dmd$ is abelian, the condition ``$\vk_*(\delta[\cbar])$ is neutral''
means that $\vk_*(\delta[\cbar])=1$.
\end{remark}

Theorem \ref{t:twist'} is the main result of this paper.
Theorems \ref{t:tits'}, \ref{t:H-times-H'}, and \ref{t:U'} below
are applications of Theorem \ref{t:twist'} to the case when $Y=G/H$ is a homogeneous space of $G$.
In this case $\sA=A(k)$, where $A=\sN_{G}(H)/H$ and $\sN_{G}(H)$
denotes the normalizer of $H$ in $G$; see e.g. \cite[Lemma 5.1]{BG}.

In the following theorem, $G$ is a connected reductive group.

\begin{theorem}[Theorem \ref{t:tits}]
\label{t:tits'}
Let $G$ be a reductive group over an algebraically closed field $k$ of characteristic 0.
Let $H\subset G$ be a $k$-subgroup.
Let $k_0\subset k$ be a subfield such that $k$ is an algebraic closure of $k$.
Let $G_0$ be a $k_0$-model of $G$. Write
$G_0=\hs_c G_\qs$, where $G_\qs$ is a quasi-split $k_0$-model of $G$ and $c\in Z^1(k_0,G_\qs/Z(G_\qs)\hs)$.
Assume that
\begin{equation}\label{e:cond}
\text{$G/H$ admits a $G_\qs$-equivariant $k_0$-model  $Y_\qs$.}\tag{$*$}
\end{equation}
The $k_0$-model $Y_\qs$ defines a $k_0$-model $A_\qs=\Aut^{G_\qs}(Y_\qs)$ of $A=\Aut^G(Y)$.
Then $G/H$ admits a $G_0$-equivariant $k_0$-model $Y_0$ if and only if
the image in $H^2(k_0,A_\qs)$ of the Tits class $t(\Gtil_0)\in H^2(k_0,Z(\Gtil_\qs))$ is neutral
(see Section \ref{s:tits} below for the definition of the Tits class).
\end{theorem}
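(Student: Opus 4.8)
The plan is to deduce Theorem~\ref{t:tits'} from the main result Theorem~\ref{t:twist'} by taking $G_\dmd=G_\qs$, $Y_\dmd=Y_\qs$, and $\sA_\dmd=A_\qs$, and then to rewrite the obstruction class $\vk_*(\delta[c])$ appearing there in terms of the Tits class.

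First I would check the hypotheses of Theorem~\ref{t:twist'}. Since $G$ is reductive, hence linear, the homogeneous space $Y=G/H$ is quasi-projective (a quotient of a linear algebraic group by a closed subgroup is quasi-projective, by Chevalley's theorem), so the quasi-projectivity assumption holds. Condition \eqref{e:cond} furnishes the required $G_\qs$-equivariant $k_0$-model $Y_\qs$, which defines the $\Gamma$-group $A_\qs=\Aut^{G_\qs}(Y_\qs)$, a $k_0$-model of $\sA=\Aut^G(Y)=A(k)$ with $A=\sN_G(H)/H$ (see \cite[Lemma~5.1]{BG}). Theorem~\ref{t:twist'} then asserts that $G/H$ admits a $G_0$-equivariant $k_0$-model if and only if the class $\vk_*(\delta[c])\in H^2(\Gamma,A_\qs)=H^2(k_0,A_\qs)$ is neutral, where $\delta\colon H^1(k_0,\Gbar_\qs)\to H^2(k_0,Z_\qs)$ is the connecting map of $1\to Z_\qs\to G_\qs\to\Gbar_\qs\to1$ and $\vk\colon Z_\qs(k)\to A(k)$ sends $z$ to the automorphism $y\mapsto z\cdot y$. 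It remains to identify $\vk_*(\delta[c])$ with the image of the Tits class.

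Next I would invoke the simply connected datum of Section~\ref{s:tits}: the group $\Gtil_\qs$ with $\Gtil_\qs/Z(\Gtil_\qs)=\Gbar_\qs$, together with a homomorphism $\pi\colon\Gtil_\qs\to G_\qs$ inducing the identity on $\Gbar_\qs$ and restricting to $\phi:=\pi|_{Z(\Gtil_\qs)}\colon Z(\Gtil_\qs)\to Z_\qs$ on centers. Thus $\pi$ is a morphism of central extensions
\[
\bigl(1\to Z(\Gtil_\qs)\to\Gtil_\qs\to\Gbar_\qs\to1\bigr)\ \longrightarrow\ \bigl(1\to Z_\qs\to G_\qs\to\Gbar_\qs\to1\bigr)
\]
over the identity of $\Gbar_\qs$. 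By naturality of the connecting map, the square relating the connecting homomorphism $\delta_\sc\colon H^1(k_0,\Gbar_\qs)\to H^2(k_0,Z(\Gtil_\qs))$ of the top row to $\delta$ commutes, so that $\delta[c]=\phi_*\bigl(\delta_\sc[c]\bigr)$. By the definition of the Tits class recalled in Section~\ref{s:tits} one has $t(\Gtil_0)=\delta_\sc[c]$ (for the sign convention fixed there), whence $\delta[c]=\phi_*\bigl(t(\Gtil_0)\bigr)$. Finally I would compose: since $z\cdot gH=gzH$ for $z\in Z(G)$, the map $\vk$ is, on $k$-points, the composite $Z(G)\into\sN_G(H)\onto A$, and its image is central in $A_\qs$ because $Z(G)$ commutes with every element of $\sN_G(H)$. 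Hence $\psi:=\vk\circ\phi\colon Z(\Gtil_\qs)\to A_\qs$ is the $\Gamma$-equivariant homomorphism $Z(\Gtil_\qs)\xrightarrow{\phi}Z(G)\into\sN_G(H)\onto A$ with central image, and functoriality of the induced maps on second cohomology gives
\[
\vk_*(\delta[c])=\vk_*\bigl(\phi_*(t(\Gtil_0))\bigr)=\psi_*\bigl(t(\Gtil_0)\bigr),
\]
which is exactly the image of the Tits class in $H^2(k_0,A_\qs)$. Together with Theorem~\ref{t:twist'}, this yields the stated criterion.

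The main obstacle I anticipate is the cohomological bookkeeping in the nonabelian setting. One must verify that the push-forward $\vk_*$ from the abelian group $H^2(k_0,Z_\qs)$ into Springer's possibly nonabelian $H^2(\Gamma,A_\qs)$ is functorial in the coefficients, so that $\psi_*=\vk_*\circ\phi_*$, and that the notion of \emph{neutral} class is respected (in particular, insensitive to the sign convention for $t(\Gtil_0)$). This is precisely where the centrality of $\vk(Z(G))$ in $A_\qs$ is essential: it is what makes the image of an abelian class well behaved under $\vk_*$ and its neutrality a meaningful, sign-independent condition. The remaining ingredients—quasi-projectivity of $G/H$, the identification $\sA=A(k)$, and the commutativity of the connecting-map square arising from $\pi$—are standard.
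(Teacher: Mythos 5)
Your proposal is correct and follows essentially the same route as the paper: reduce to Theorem~\ref{t:twist'} with $G_\dmd=G_\qs$, $Y_\dmd=Y_\qs$, then use the morphism of central extensions $\bigl(1\to Z(\Gtil_\qs)\to\Gtil_\qs\to\Gbar_\qs\to1\bigr)\to\bigl(1\to Z_\qs\to G_\qs\to\Gbar_\qs\to1\bigr)$ and naturality of the connecting maps to identify $\vk_*(\delta[c])$ with the image of $t(\Gtil_0)=\deltatil[c]$ under $H^2(k_0,Z(\Gtil_\qs))\to H^2(k_0,A_\qs)$. The extra details you supply (quasi-projectivity of $G/H$ via Chevalley, functoriality of the push-forward into the nonabelian $H^2$, centrality of $\vk(Z(G))$ in $A_\qs$) are consistent with, and slightly more explicit than, the paper's argument.
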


\begin{remark}\label{r:H1}
In Theorem \ref{t:tits'}, if there exists a $G_0$-equivariant $k_0$-model $Y_0$ of $G/H$,
then the set of isomorphism classes of such models is in a canonical bijection with the set $H^1(k_0, \Aut^{G_0}(Y_0))$.
\end{remark}

\begin{subsec}
Let
\[\ctil\colon \Gamma\to G_\dmd(k)\]
be a 1-cocycle \emph{with values in $G_\dmd$}, that is, $\ctil\in Z^1(k_0,G_\dmd)$.
Consider  $i\circ \ctil\in Z^1(k_0,\Gbar_\dmd)$, then
by abuse of notation we write $_\ctil G_\dmd$ for  $_{i\circ\ctil} G_\dmd$.
We say that $_\ctil G_\dmd$ is a \emph{pure inner form} of $G_\dmd$.
For a pure inner form $G_0=\hs_\ctil G_\dmd$, the $G$-variety $Y$
clearly admits a  $G_0$-equivariant $k_0$-model:
we may take $Y_0=\hs_\ctil Y_\dmd$; see Lemma \ref{l:pure-inner} below.
It follows from the cohomology exact sequence \eqref{e:coh-e-s} below
that for a cocycle $c\in Z^1(k_0, \Gbar_\dmd)$,
the twisted form $_c G_\dmd$ is a pure inner form of $G_\dmd$ if and only if $\delta[c]=1$.
\end{subsec}

\begin{subsec}\label{ss:H-times-H/Delta'}
Let $H$ be a connected linear $k$-group, and set $G=H\times_k H$.
Let $Y=H$, where $G$ acts on $Y$ by
\[ (h_1,h_2)*y=h_1\hs y \hs h_2^{-1}.\]
Note that $Y=G/\Delta$, where $\Delta\subset H\times_k H$ is the diagonal,
that is, $\Delta$ is $H$ embedded in $G$ diagonally.
Let $H^{(1)}_0$ and $H^{(2)}_0$ be two $k_0$-models of $H$.
We set $G_0=H^{(1)}_0\times_{k_0} H^{(2)}_0$ and ask whether $Y$ admits a $G_0$-equivariant $k_0$-model.
\end{subsec}

\begin{theorem}[Theorem \ref{t:H-times-H}]
\label{t:H-times-H'}
With the notation and assumptions of \ref{ss:H-times-H/Delta'},
$Y=(H\times H)/\Delta$ admits an $H^{(1)}_0\times_{k_0} H^{(2)}_0$-equivariant $k_0$-model
if and only if $H^{(2)}_0$ is a pure inner form of $H^{(1)}_0$.
\end{theorem}

\begin{example}
Let $k=\C$, $k_0=\R$, then $\Gamma=\{1,\gamma\}$, where $\gamma$ is the complex conjugation.
Let $H=\SL(4,\C)$. Consider the diagonal matrices
\[ I_4=\diag(1,1,1,1)\quad\text{and}\quad I_{2,2}=\diag(1,1,-1,-1).\]
Consider the real models $\SU(2,2)$ and $\SU(4)$ of $G$:
\begin{align*}
H^{(1)}_0&=\SU(2,2),\text{ where }\SU(2,2)(\R)=\{g\in \SL(4,\C)\ |\ g\cdot I_{2,2}\cdot\upgam g^\tr=I_{2,2}\},\\
H^{(2)}_0&=\SU(4), \text{ where }\SU(4)(\R)=\{g\in \SL(4,\C)\ |\ g\cdot I_4\cdot\upgam g^\tr=I_4\},
\end{align*}
where $g^\tr$ denotes the transpose of $g$.
Consider the 1-cocycle
\[c\colon\Gamma\to \SU(2,2)(\R),\quad 1\mapsto I_4,\ \gamma\mapsto I_{2,2}\hs.\]
A calculation shows that $_c \SU(2,2)\simeq \SU(4)$.
Thus $\SU(4)$ is a pure inner form of $\SU(2,2)$.
By Theorem \ref{t:H-times-H'}, there exists an $\SU(2,2)\times_\R \SU(4)$-equivariant
real model $Y_0$ of $Y=(H\times H)/\Delta.$
We describe this model explicitly.
We may  take for $Y_0$ the \emph{transporter}
\[Y_0=\{g\in \SL(4,\C)\ |\ g\cdot I_{4}\cdot \upgam g^\tr=I_{2,2}\}.\]
Clearly $Y_0$ is defined over $\R$.
It is well known that $Y_0$ is nonempty but it has no $\R$-points.
The group $G_0:=H^{(1)}_0\times_\R H^{(2)}_0$ acts on $Y_0$ by
\[(h_1,h_2)*g=h_1\hs g h_2^{-1}.\]
It is clear that $Y_0$ is a principal homogeneous space of both $H^{(1)}_0$ and $H^{(2)}_0$.
Thus $Y_0$ is a $G_0$-equivariant $k_0$-model of $Y$.
Compare \cite[Example 10.11]{BG}.
\end{example}

In the following theorem, $G$ is a connected reductive group and $Y=G/U$.

\begin{theorem}[Theorem \ref{t:U}]
\label{t:U'}
Let $k$ and $k_0$ be as in \ref{ss:intro-models},
and let $G$ be a  connected reductive group over $k$.
Let $B\subset G$ be a Borel subgroup, and write $U$ for the unipotent radical of $B$.
Consider the homogeneous space $Y=G/U$.
Let $G_0$ be a $k_0$-model of $G$.
Then $Y$ admits a $G_0$-equivariant $k_0$-model if and only if $G_0$
is a pure inner form of a quasi-split model of $G$.
\end{theorem}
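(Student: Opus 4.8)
The plan is to invoke Theorem~\ref{t:twist'} with $G_\dmd$ equal to a quasi-split $k_0$-model $G_\qs$ of $G$, which exists by~\ref{ss:intro-qs}. Fixing a Borel subgroup $B_\qs\subset G_\qs$ defined over $k_0$ and a maximal torus $T_\qs\subset B_\qs$ defined over $k_0$, and setting $U_\qs=R_u(B_\qs)$, the variety $Y_\qs:=G_\qs/U_\qs$ is a $G_\qs$-equivariant $k_0$-model of $Y=G/U$, as recalled in~\ref{ss:intro-qs}; this provides the model $Y_\dmd=Y_\qs$ required by the theorem. Since $U$ is a maximal unipotent subgroup, $Y=G/U$ is quasi-affine, hence quasi-projective, so the hypotheses of Theorem~\ref{t:twist'} hold. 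Writing $G_0={}_cG_\qs$ with $c\in Z^1(k_0,\Gbar_\qs)$, Theorem~\ref{t:twist'} then asserts that $Y$ admits a $G_0$-equivariant $k_0$-model if and only if $\vk_*(\delta[c])\in H^2(\Gamma,\sA_\qs)$ is neutral.

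First I would make the $\Gamma$-group $\sA_\qs$ and the homomorphism $\vk$ explicit. For $Y=G/U$ we have $\sA=A(k)$ with $A=\sN_G(U)/U$; as the normalizer of a maximal unipotent subgroup is the corresponding Borel, $\sN_G(U)=B$ and $A=B/U\isoto T$ is a maximal torus. In particular $\sA$ is abelian, so ``neutral'' means ``trivial''. The model $Y_\qs$ endows $A$ with the $k_0$-structure $A_\qs=\sN_{G_\qs}(U_\qs)/U_\qs=B_\qs/U_\qs\isoto T_\qs$. A direct computation identifies $\vk\colon Z(G_\qs)\to A_\qs=T_\qs$ with the inclusion of the center into the maximal torus: the automorphism $y\mapsto z\cdot y$ of $G/U$ sends $gU$ to $zgU=g(zU)$, i.e.\ it is right translation by $zU\in B/U$, corresponding to $z\in T$.

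It then suffices to prove that $\vk_*\colon H^2(k_0,Z(G_\qs))\to H^2(k_0,T_\qs)$ is injective. Granting this, $\vk_*(\delta[c])=1$ if and only if $\delta[c]=1$, and by the criterion recalled before the statement (coming from the exact sequence~\eqref{e:coh-e-s}) the latter holds exactly when $G_0={}_cG_\qs$ is a pure inner form of $G_\qs$. As all quasi-split $k_0$-models of $G$ are mutually isomorphic, being a pure inner form of $G_\qs$ is the same as being a pure inner form of a quasi-split model, which is the desired conclusion. For the injectivity I would use the short exact sequence of $k_0$-groups
\[
1\to Z(G_\qs)\to T_\qs\to T_\qs/Z(G_\qs)\to 1,
\]
whose cohomology sequence reads $H^1(k_0,T_\qs/Z(G_\qs))\to H^2(k_0,Z(G_\qs))\xrightarrow{\vk_*}H^2(k_0,T_\qs)$; thus $\ker\vk_*$ is the image of the connecting map, and injectivity of $\vk_*$ reduces to the vanishing of $H^1(k_0,T_\qs/Z(G_\qs))$.

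This vanishing is the crux, and the step I expect to demand the most care. The quotient $T_\qs/Z(G_\qs)$ is a maximal torus of the adjoint group $\Gbar_\qs=G_\qs/Z(G_\qs)$, so its character lattice is the root lattice $Q$, which has the set of simple roots as a $\Z$-basis. Because $G_\qs$ is quasi-split with $B_\qs$ and $T_\qs$ defined over $k_0$, the Galois group $\Gamma$ preserves the based root datum and hence permutes the simple roots. Therefore $X^{*}(T_\qs/Z(G_\qs))=Q$ is a permutation $\Gamma$-module, so $T_\qs/Z(G_\qs)$ is a quasi-trivial (induced) $k_0$-torus and $H^1(k_0,T_\qs/Z(G_\qs))=0$ by Shapiro's lemma together with Hilbert~90. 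This yields the injectivity of $\vk_*$ and completes the argument.
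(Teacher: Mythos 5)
Your proposal is correct and follows essentially the same route as the paper: apply Theorem~\ref{t:twist'} to the model $G_\qs/U_\qs$, identify $A_\qs$ with $T_\qs$ and $\vk$ with the inclusion $Z(G_\qs)\hookrightarrow T_\qs$, and deduce injectivity of $\vk_*$ on $H^2$ from the vanishing of $H^1(k_0,T_\qs/Z(G_\qs))$, which the paper proves in Lemma~\ref{l:H1-qs} by exactly your permutation-module argument for the adjoint maximal torus.
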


\begin{example}
Let $k=\C$, $k_0=\R$, $G=\SL(4,\C)$, $Y=G/U$, where $U$ is as in Theorem \ref{t:U'}.
Let $G_0=\SU(4)$.
Since $G_0$ is a pure inner form of the quasi-split group $\SU(2,2)$,
by Theorem \ref{t:U'} the variety $G/U$ admits an $\SU(4)$-equivariant $\R$-model $Y_0$.
This model has no $\R$-points (because the stabilizer of an $\R$-point
would be a unipotent subgroup of $G_0$ defined over $\R$).
\end{example}

The plan for the rest of the paper is as follows.
In Section \ref{s:pre} we recall basic definitions and results.
In Section \ref{s:existence} we prove Theorem \ref{t:twist'}.
In Section \ref{s:tits} we prove Theorem \ref{t:tits'}.
In Section \ref{s:H x H} we prove Theorem \ref{ss:H-times-H/Delta'}.
In Section \ref{s:G/U} we prove Theorem \ref{t:U'}.

\textsc{Acknowledgements.}
The author is   grateful to Boris Kunyavski\u\i, Giuliano Gagliardi, Stephan Snigerov,
and Ronan Terpereau for stimulating discussions and/or e-mail exchanges.

\section{Preliminaries}
\label{s:pre}

\begin{subsec} \label{ss:semi-linear}
Let $k_0$, $k$, and $\Gamma$ be as in Subsection \ref{ss:intro-models}.
By a $k_0$-model of a $k$-scheme $Y$ we mean a $k_0$-scheme $Y_0$
together with an isomorphism of $k$-schemes
\[\nu_Y \colon Y_0\times_{k_0}k \isoto Y.\]

We write $\Gamma=\Gal(k/k_0)$.
For $\gamma \in \Gamma$, denote by $\gamma^{*} \colon \Spec k \to \Spec k$
the morphism of schemes induced by $\gamma$.
Notice that $(\gamma  t)^{*}=t^{*} \circ \gamma^{*}$.

Let $(Y,p_Y \colon Y\to \Spec k)$ be a $k$-scheme.
A \emph{$k/k_0$\,-semilinear automorphism of $Y$} is a pair $(\gamma,\mu)$
where $\gamma\in \Gamma$ and $\mu\colon Y\to Y$
is an isomorphism of schemes such that the diagram below commutes:
\begin{equation*}
\xymatrix@R=25pt@C=40pt{
Y\ar[r]^\mu \ar[d]_{p_Y}          & Y\ar[d]^{p_Y} \\
\Spec k\ar[r]^-{(\gamma^*)^{-1}}   & \Spec k
}
\end{equation*}
In this case we say also that $\mu$ is an $\gamma$-semilinear automorphism of $Y$.
We shorten ``$\gamma$-semilinear automorphism'' to ``$\gamma$-semi-automorphism''.
Note that if $(\gamma,\mu)$ is a semi-automorphism of $Y$, then $\mu$ uniquely determines $\gamma$;
see \cite[Lemma 1.2]{BG}.

We denote $\SAut(Y)$ the group of all $\gamma$-semilinear automorphisms $\mu$ of $Y$,
where $\gamma$ runs over $\Gamma=\Gal(k/k_0)$.
By a \emph{semilinear action} of $\Gamma$ on $Y$ we mean a homomorphism of groups
\[\mu \colon \Gamma \rightarrow \SAut(Y), \quad \gamma \mapsto \mu_\gamma\]
such that for each $\gamma \in \Gamma$, $\mu_\gamma$ is $\gamma$-semilinear.

If we have a $k_0$-scheme $Y_0$, then the formula
\begin{equation}\label{e:s-action}
\gamma \mapsto \id_{Y_0} \times (\gamma^{*})^{-1}
\end{equation}
defines a semilinear action of $\Gamma$ on
\[Y:=Y_0\times_{k_0}k=Y_0\times_{\Spec k_0}\Spec k.\]
Thus a $k_0$-model of $Y$ induces a semilinear action of $\Gamma$ on $Y$.

Let $(G,p_G \colon G \to \Spec k)$ be a $k$-group-scheme.
A $k/k_0$\,-semi-linear automorphism of $G$ is a pair
$(\gamma,\tau)$ where $\gamma\in \Gamma$ and $\tau\colon G\to G$
is a morphism of schemes such that the following diagram commutes
\begin{equation*}
\xymatrix@R=25pt@C=40pt{
G\ar[r]^\tau \ar[d]_{p_G}          & G\ar[d]^{p_G} \\
\Spec k\ar[r]^-{(\gamma^*)^{-1}}   & \Spec k
}
\end{equation*}
and the $k$-morphism
\[\tau_\natural\colon\gamma_*G\to G\]
is an isomorphism of algebraic groups over $k$; see \cite[Definition 2.2]{BG}
for the notations $\tau_\natural$ and $\gamma_* G$.

We denote  by $\SAut_{k/k_0}(G)$, or just by $\SAut(G)$,
the group of all $\gamma$-semilinear automorphisms
$\tau$ of $G$, where $\gamma$ runs over $\Gamma=\Gal(k/k_0)$.
By a semilinear action of $\Gamma$ on $G$ we mean a homomorphism
\[\sigma \colon \Gamma \to \SAut(G), \quad \gamma \mapsto \siga\]
such that for all $\gamma \in \Gamma$, $\siga$ is $\gamma$-semilinear.
As above, a $k_0$-model $G_0$ of $G$ induces a semilinear action of $\Gamma$ on $G$.

Let $G$ be an algebraic group over $k$ and let $Y$ be a $G$-$k$-variety.
Let $G_0$ be a $k_0$-model of $G$. It gives rise to a semilinear action
$\sigma \colon \Gamma \rightarrow \SAut(G),\gamma \mapsto \siga$.
Let $Y_0$ be a $G_0$-equivariant $k_0$-model of $Y$.
It gives rise to a semilinear action $\mu \colon \Gamma \to \SAut(Y)$ such that
for all $\gamma$ in $\Gamma$ we have
\begin{equation*}
\mu_\gamma(g \cdot y)=\sigma_\gamma(g) \cdot \mu_\gamma(y) \quad\text{for all } y\in Y(k),\ g\in G(k).
\end{equation*}
We say then that $\muga$ is \emph{$\siga$-equivariant.}
\end{subsec}

\begin{subsec}\label{ss:tw}
Let $k_0,\ k,$ and $\Gamma$ be as in Subsection \ref{ss:intro-models}.
Let $G_0$ be a $k_0$-model of $G$; it defines a semilinear action
\[ \sigma\colon \Gamma\to \SAut(G).\]
This action induces an action of $\Gamma$ on the abstract group $\Aut(G)$.
Recall that a map
\[c\colon \Gamma\to \Aut(G)\]
is called a \emph{1-cocycle} if the map $c$ is locally constant
and satisfies the cocycle condition \eqref{e:cocycle-cond}.
The set of such 1-cocycles is denoted by $Z^1(\Gamma,\Aut(G)\hs)$ or $Z^1(k_0,\Aut(G)\hs)$.
For $c\in Z^1(k_0,\Aut(G)\hs)$, we consider the $c$-twisted semilinear action
\[\sigma'\colon \Gamma\to \SAut(G),\quad \gamma\mapsto c_\gamma\circ \sigma_\gamma.\]
Then, clearly, $\sigma'_\gamma$ is an $\gamma$-semi-automorphism of $G$ for any $\gamma\in\Gamma$.
It follows from the cocycle condition \eqref{e:cocycle-cond} that
\[\sigma'_{\gamma t}=\sigma'_\gamma\circ\sigma'_t\ \text{ for all }\gamma,t\in\Gamma.\]
Since $G$ is an algebraic group, the semilinear action $\sigma'$ comes from some $k_0$-model $G_0'$ of $G$;
see Serre \cite[Section V.4.20, Corollary 2(ii) of Proposition 12]{Serre0}
and Serre \cite[III.1.3, Proposition 5]{Serre}.
We write $G_0'=\hs_c G_0$ and say that $G_0'$ is
the \emph{twisted form of $G_0$ defined by the 1-cocycle} $c$.
 \end{subsec}

\begin{lemma}\label{l:pure-inner}
Let $G$ be a linear algebraic group over $k$, and let $Y$ be a \emph{quasi-projective} $G$-$k$-variety.
Let $G_\dmd$ be a $k_0$-model of $G$, and assume that $Y$ admits a $G_\dmd$-equivariant $k_0$-model $Y_\dmd$.
Let $\ctil\in Z^1(k_0,G_\dmd)$ be a 1-cocycle.
Consider the pure inner form $_\ctil G_\dmd$. Then $Y$ admits a  $_\ctil G_\dmd$-equivariant $k_0$-model.
\end{lemma}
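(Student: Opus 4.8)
The plan is to obtain $Y_0$ by the standard twisting construction: I twist the semilinear $\Gamma$-action attached to the model $Y_\dmd$ by the action of the cocycle $\ctil$ on $Y$, and then descend. Recall first the data already at hand, in the language of \ref{ss:semi-linear}. The $k_0$-model $G_\dmd$ induces a semilinear action $\siga$, $\sigma\colon\Gamma\to\SAut(G)$, and the $G_\dmd$-equivariant model $Y_\dmd$ induces a semilinear action $\muga$, $\mu\colon\Gamma\to\SAut(Y)$, with each $\muga$ being $\siga$-equivariant:
\[\muga(g\cdot y)=\siga(g)\cdot\muga(y)\qquad(g\in G(k),\ y\in Y(k)).\]
By definition (see \ref{ss:tw}), the pure inner form ${}_\ctil G_\dmd={}_{i\circ\ctil}G_\dmd$ corresponds to the twisted semilinear action $\sigma'_\gamma=i_{\ctil_\gamma}\circ\siga$, where $i_{\ctil_\gamma}$ denotes conjugation by $\ctil_\gamma\in G(k)$.

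For $g\in G(k)$ let $L_g\colon Y\to Y$, $y\mapsto g\cdot y$, be the corresponding automorphism of $Y$ over $k$; it is an element of $\SAut(Y)$ covering $1\in\Gamma$. I would then set
\[\mu'_\gamma:=L_{\ctil_\gamma}\circ\muga\qquad(\gamma\in\Gamma),\qquad\text{so that }\ \mu'_\gamma(y)=\ctil_\gamma\cdot\muga(y).\]
Each $\mu'_\gamma$ is again $\gamma$-semilinear, since $L_{\ctil_\gamma}$ is a $k$-automorphism and $\muga$ is $\gamma$-semilinear. There are then two verifications to carry out. First, $\mu'$ is a homomorphism: using the $\siga$-equivariance of $\muga$ and the relation $\muga\circ\mu_t=\mu_{\gamma t}$,
\[\mu'_\gamma\bigl(\mu'_t(y)\bigr)=\ctil_\gamma\cdot\muga\bigl(\ctil_t\cdot\mu_t(y)\bigr)=\ctil_\gamma\cdot\upgam\ctil_t\cdot\mu_{\gamma t}(y)=\ctil_{\gamma t}\cdot\mu_{\gamma t}(y)=\mu'_{\gamma t}(y),\]
where the third equality is exactly the cocycle condition $\ctil_{\gamma t}=\ctil_\gamma\cdot\upgam\ctil_t$. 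Second, $\mu'_\gamma$ is $\sigma'_\gamma$-equivariant: for $g\in G(k)$ and $y\in Y(k)$,
\[\mu'_\gamma(g\cdot y)=\ctil_\gamma\cdot\siga(g)\cdot\muga(y)=\bigl(\ctil_\gamma\,\siga(g)\,\ctil_\gamma^{-1}\bigr)\cdot\bigl(\ctil_\gamma\cdot\muga(y)\bigr)=\sigma'_\gamma(g)\cdot\mu'_\gamma(y).\]

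Since $\mu$ and $\ctil$ are locally constant, so is $\mu'$; hence $\mu'\colon\Gamma\to\SAut(Y)$ is a semilinear action of $\Gamma$ on $Y$. To finish I invoke effective Galois descent for quasi-projective varieties (the analogue for varieties of the descent for algebraic groups cited in \ref{ss:tw}): because $Y$ is quasi-projective, the action $\mu'$ arises from a $k_0$-model $Y_0={}_\ctil Y_\dmd$ of $Y$. By the second verification above, $\mu'_\gamma$ is $\sigma'_\gamma$-equivariant for every $\gamma$, which says precisely that $Y_0$ is a ${}_\ctil G_\dmd$-equivariant $k_0$-model of $Y$, as desired.

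I expect the cocycle bookkeeping (the two displayed verifications) to be entirely routine; the one genuinely nontrivial ingredient is the descent step, and it is exactly there that the hypothesis that $Y$ is quasi-projective enters. Without quasi-projectivity a semilinear $\Gamma$-action on $Y$ need not be effective, so the construction would still produce the action $\mu'$ but not necessarily a model realizing it.
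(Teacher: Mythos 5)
Your proposal is correct and follows essentially the same route as the paper: twist the semilinear action $\mu$ attached to $Y_\dmd$ by left translation by $\ctil_\gamma$, check the homomorphism and equivariance identities via the cocycle condition, and descend using quasi-projectivity (the paper cites Borel--Serre for the descent of the variety and Jahnel for the descent of the action). Your write-up in fact makes explicit the two computations the paper dismisses as easy, and correctly locates the role of the quasi-projectivity hypothesis.
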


\begin{proof}
Write $G_0=\hs _\ctil G_\dmd$. We take $Y_0= _\ctil Y_\dmd$,
then $Y_0$ is a $G_0$-equivariant $k_0$-model.

We give details. The $k_0$-models $G_\dmd$ and $Y_\dmd$ define  semilinear actions
\[ \sigma\colon \Gamma\to\SAut(G)\quad\text{and}\quad\mu\colon\Gamma\to\SAut(Y)\]
such that for any $s\in\Gamma$ the semi-automorphism $\mu_s$ is $\sigma_s$-equivariant, that is,
\[\mu(g\cdot y)=\sigma_\gamma(g)\cdot\mu_\gamma(y)\quad\text{for all }g\in G(k),\ y\in Y(k).\]
Let $\ctil\colon \Gamma\to G(k)$ be a 1-cocycle, that is, $\ctil\in Z^1(k_0,G_\dmd)$.
Consider the pure inner form $G_0=\hs_\ctil G_\dmd$, then
\[\sigma_\gamma^0(g)=\ctil_\gamma\cdot \sigma_\gamma(g)\cdot \ctil_\gamma^{-1}
             \text{ for }\gamma\in\Gamma,\ g\in G(k).\]
where $\sigma^0$ is the semilinear action defined by $G_0$.
Now we define the twisted form $_\ctil Y_\dmd$ as follows.
We set
\[\mu_\gamma^0(y)=\ctil_\gamma\cdot\mu_\gamma^0(y).\]
Since $\ctil$ is a 1-cocycle, we have
\[\mu^0_{\gamma t}=\mu^0_\gamma\circ\mu^0_t\quad\text{for all }\gamma,t\in\Gamma.\]
Since $Y$ is quasi-projective, by Borel and Serre \cite[Lemme 2.12]{Borel-Serre}
the semilinear action $\mu^0\colon\Gamma\to\SAut(Y)$ defines a $k_0$-model $Y_0$ of $Y$.
An easy calculation shows that
\[\mu^0(g\cdot y)=\sigma^0_\gamma(g)\cdot\mu^0_\gamma(y)\quad\text{for all }g\in G(k),\ y\in Y(k),\]
hence by Galois descent we obtain an action of $G_0$ on $Y_0$ (defined over $k_0$);
see Jahnel \cite[Theorem 2.2(b)]{Jahnel}.
Thus $Y$ admits a $G_0$-equivariant $k_0$-model $Y_0=\hs_\ctil Y_\dmd$.
\end{proof}

\section{Model for an inner twist of the group}
\label{s:existence}

\begin{subsec}\label{ss:horo-1}
Let $k$ be an algebraically closed field of characteristic 0.
Let $G$ be an algebraic group over $k$.
Let $Y$ be a $G$-$k$-variety.
Let $Z(G)$ denote the center of $G$.
We consider the algebraic group $\Gbar:=G/Z(G)$.
The group $\Gbar(k)$ naturally acts on $G$:
\[g\hs Z(G)\colon\ x\mapsto g \hs x \hs g^{-1}\quad\text{for }g\hs Z(G)\in \Gbar(k),\ x\in G(k).\]

Let $k_0$ be a subfield of $k$ such that $k/k_0$ is an algebraic extension.
We write $\Gamma=\Gal(k/k_0)$, which is a profinite group.

Let $G_\dmd$ be a $k_0$-model of $G$.
We write $\Gbar_\dmd=G_\dmd/Z(G_\dmd)$, where $Z(G_\dmd)$ is the center of $G_\dmd$.
The $k_0$-model $\Gbar_\dmd$ of $\Gbar$ defines  a semilinear action:
\[\sigma\colon\Gamma\to \SAut(\Gbar);\]
cf.  \eqref{e:s-action}.
We write $\Gbar_\dmd(k)$ for the group of $k$-points of the algebraic $k_0$-group $\Gbar_\dmd$,
then we have an action of $\Gamma$ on $\Gbar_\dmd(k)$:
\[(s,g)\mapsto \upgam g=\sigma_s(g) \quad\text{for } s\in\Gamma,\ g\in \Gbar_\dmd(k)=\Gbar(k).\]

Let $c\in Z^1(k_0,\Gbar_\dmd)$ be a 1-cocycle, that is, a locally constant map
\[c\colon \Gamma\to \Gbar_\dmd(k)\quad\text{such that}
\quad c_{\gamma t}=c_\gamma\cdot\upgam c_t\quad\text{for all }\gamma,t\in\Gamma.\]
We denote by $G_0=\hs_c G_\dmd$ the corresponding inner twisted form of $G_\dmd$,
see Subsection \ref{ss:tw}.
This means that $G_0(k)=G_\dmd(k)$, but the Galois action is twisted by $c$:
\[\sigma^0_\gamma=c_\gamma\circ\sigma_\gamma\quad\text{for }\gamma\in \Gamma,\]
where we embed $\Gbar_\dmd(k)$ into $\Aut(G)$.

In this section we {\em assume} that there exists a $G_\dmd$-equivariant $k_0$-model $Y_\dmd$ of $Y$.
We give a criterion for the existence of a $G_0$-equivariant $k_0$-model $Y_0$ of $Y$,
where $G_0=\hs_c G_\dmd$.
\end{subsec}

\begin{subsec}\label{ss:horo-2}
We write $[c]\in H^1(k_0,\Gbar_\dmd)$ for the cohomology class of $c$.
We consider the short exact sequence
\[ 1\to Z(G_\dmd)\to G_\dmd\to \Gbar_\dmd\to 1\]
and the corresponding connecting map
\[\delta\colon H^1(k_0,\Gbar_\dmd)\to H^2(k_0,Z(G_\dmd))\]
from the cohomology exact sequence
\begin{equation}\label{e:coh-e-s}
H^1(k_0,Z(G_\dmd))\to H^1(k_0,G_\dmd)\to H^1(k_0,\Gbar_\dmd)\labelto{\delta}H^2(k_0,Z(G_\dmd));
\end{equation}
see Serre \cite[I.5.7, Proposition 43]{Serre}.
We obtain $\delta[c]\in H^2(k_0,Z(G_\dmd)$.

The $G_\dmd$-equivariant $k_0$-model $Y_\dmd$ of $Y$  defines
an action of $\Gamma$ on $\sA:=\Aut^G(Y)$ by
\[(\upgam a)(\upgam y)=\upgam(a(y))\quad \text{for }\gamma\in \Gamma,\ a\in\sA,\ y\in Y(k).\]
We denote by $\sA_\dmd$ the corresponding $\Gamma$-group.
We obtain homomorphisms
\[\mu\colon \Gamma\to\SAut(Y), \quad \gamma\mapsto \mu_\gamma,\quad\text{where }\mu_\gamma(y)=\upgam y\ \
\text{for }\gamma\in\Gamma,\ y\in Y(k)=Y_\dmd(k),\]
and
\begin{equation*}
\tau\colon \Gamma\to\Aut(\sA),\quad \gamma\mapsto\tau_\gamma\hs, \quad\text{where }\tau_\gamma(a)=\upgam a\ \
\text{for }\gamma\in\Gamma,\ a\in \sA.
\end{equation*}

The center $Z_\dmd\subset G_\dmd$ acts on $Y_\dmd$,
and this action clearly commutes with the action of $G_\dmd$.
Thus we obtain a canonical $\Gamma$-equivariant homomorphism
\[\vk\colon Z_\dmd(k)\to \sA_\dmd.\]
\end{subsec}

\begin{subsec}
We need  the nonabelian cohomology set $H^2(\Gamma,\sA_\dmd)$; see Springer \cite[1.14]{Springer-H2}.
Recall that an (abelian) 2-cocycle $z\in Z^2(k_0, Z_\dmd)$ is a locally constant map
\[a\colon \Gamma\times \Gamma\to Z_\dmd(k),\quad (s,t)\mapsto z_{s,t}\]
such that
\[\upgam d_{t,u}\cdot d_{s,tu}=d_{s,t}\cdot d_{st,u}\quad\text{for all }s,t,u\in \Gamma.\]
Then $\vk_*([z])\in H^2(\Gamma,\sA_\dmd)$ is by definition the class of the 2-cocycle $(\tau,\vk\circ z)$.
This class is called \emph{neutral} if there exists a locally constant map $a\colon \Gamma \to \sA_\dmd$
such that
\[ a_s\cdot \upgam a_t\cdot\vk(z_{s,t})\cdot a_{st}^{-1}=1\quad\text{for all }s,t\in\Gamma.\]
\end{subsec}

\begin{theorem}\label{t:twist}
Let $G,\ H,\ Y,\ k_0,\ G_\dmd,\ Y_\dmd,\  A_\dmd,\ \delta$
be as in Subsections \ref{ss:horo-1} and \ref{ss:horo-2}.
In particular we assume that  $Y$ admits a $G_\dmd$-equivariant $k_0$-model $Y_\dmd$.
We assume also that $Y$ is quasi-projective.
Let $c\in Z^1(k_0,\Gbar_\dmd)$ be a 1-cocycle, and consider it class $[c]\in H^1(k_0,\Gbar_\dmd)$.
Set $G_0=\hs_c G_\dmd$ (the inner twisted form of $G_\dmd$ defined by the 1-cocycle $c$).
The $G$-variety $Y$ admits a $G_0$-equivariant $k_0$-model if and only if the cohomology class
\[\vk_*(\delta[c])\in H^2(\Gamma, \sA_\dmd)\]
is neutral.
\end{theorem}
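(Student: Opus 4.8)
The plan is to reformulate the existence of a $G_0$-equivariant $k_0$-model as the existence of a suitable semilinear action of $\Gamma$ on $Y$, to produce an explicit (non-multiplicative) candidate from $Y_\dmd$ and the cocycle $c$, and then to read off the obstruction to multiplicativity as a $2$-cocycle in $\sA_\dmd$.

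First I would recall that, because $Y$ is quasi-projective, giving a $G_0$-equivariant $k_0$-model of $Y$ is equivalent to giving a homomorphism $\mu'\colon\Gamma\to\SAut(Y)$ with each $\mu'_s$ an $s$-semi-automorphism that is $\sigma^0_s$-equivariant: indeed, by Borel and Serre \cite[Lemme 2.12]{Borel-Serre} such a $\mu'$ descends to a $k_0$-model $Y_0$ of $Y$, and by Galois descent \cite[Theorem 2.2(b)]{Jahnel} the $G_0$-action descends as well, exactly as in the proof of Lemma \ref{l:pure-inner}. So the whole problem is to decide when such a multiplicative $\mu'$ exists.

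Next I would build a reference family. Choosing locally constant lifts $\widetilde{c}_s\in G(k)$ of $c_s\in\Gbar_\dmd(k)$ (possible since $c$ is locally constant and $G(k)\to\Gbar(k)$ is surjective over the algebraically closed field $k$), I set
\[\mu^0_s(y)=\widetilde{c}_s\cdot\mu_s(y)\qquad(s\in\Gamma,\ y\in Y(k)).\]
Since $\sigma^0_s=i_{c_s}\circ\sigma_s$ and conjugation by $\widetilde{c}_s$ represents $i_{c_s}$ independently of the chosen lift, a short check shows each $\mu^0_s$ is an $s$-semilinear $\sigma^0_s$-equivariant automorphism. Using that $\mu$ is multiplicative and $\sigma$-equivariant, one computes the failure of $\mu^0$ to be multiplicative:
\[\mu^0_s\circ\mu^0_t=\vk(z_{s,t})\circ\mu^0_{st},\qquad z_{s,t}=\widetilde{c}_{st}^{\,-1}\cdot\widetilde{c}_s\cdot\upgam\widetilde{c}_t\ \in Z_\dmd(k),\]
and $z=(z_{s,t})$ is precisely the standard $2$-cocycle representing $\delta[c]\in H^2(k_0,Z_\dmd)$.

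Then I would parametrize all candidates and extract the condition. Any $\sigma^0_s$-equivariant $s$-semi-automorphism differs from $\mu^0_s$ by a $G$-equivariant automorphism, hence equals $a_s\circ\mu^0_s$ for a unique $a_s\in\sA$, and conversely every such composite is admissible. Because each $\widetilde{c}_s$ acts on $Y$ $G$-equivariantly, it commutes with $\sA$, so conjugation by $\mu^0_s$ induces on $\sA$ the same $\Gamma$-action $\tau$ as $\mu$, i.e. $\mu^0_s\circ a\circ(\mu^0_s)^{-1}=\upgam a$. Substituting $\mu'_s=a_s\circ\mu^0_s$ and using the two displayed identities, the multiplicativity $\mu'_{st}=\mu'_s\circ\mu'_t$ becomes
\[a_s\cdot\upgam a_t\cdot\vk(z_{s,t})\cdot a_{st}^{-1}=1\qquad\text{for all }s,t\in\Gamma,\]
which is exactly the statement that the class of $(\tau,\vk\circ z)=\vk_*(\delta[c])$ in $H^2(\Gamma,\sA_\dmd)$ is neutral. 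Running this equivalence in both directions (taking $(a_s)$ locally constant from neutrality, and recovering a locally constant $(a_s)$ from a given model) proves the theorem. The routine points are the local constancy of all the data, the lift-independence of $\mu^0_s$, and the descent step; the genuine content, and the step I expect to require the most care, is the identity $\mu^0_s\circ\mu^0_t=\vk(z_{s,t})\circ\mu^0_{st}$ together with the verification that $z$ represents $\delta[c]$, since this is what transports the purely group-theoretic obstruction $\delta[c]$ into $\sA_\dmd$ through $\vk$.
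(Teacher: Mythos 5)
Your proposal is correct and follows essentially the same route as the paper: lift $c$ to a locally constant map $\ctil$, form the reference family $l(\ctil_s)\circ\mu_s$, observe that every $\sigma^0_s$-equivariant candidate is $a_s\circ l(\ctil_s)\circ\mu_s$ for a unique $a_s\in\sA_\dmd$, and identify the multiplicativity defect with the $2$-cocycle $\vk(\ctil_s\,\upgam\ctil_t\,\ctil_{st}^{-1})$ representing $\vk_*(\delta[c])$; the final descent step is handled in the paper via \cite[Lemma~6.3]{BG}, which packages the Borel--Serre and Galois-descent arguments you invoke directly. The only cosmetic difference is that you first compute the defect of the untwisted family and then adjust by $(a_s)$, whereas the paper computes the defect of $a_s\circ l(\ctil_s)\circ\mu_s$ in one pass (Lemmas~\ref{l:equi} and~\ref{l:hom}).
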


\begin{proof}
The $k_0$-model $G_\dmd$ of $G$ defines a homomorphism
\[\sigma\colon\Gamma\to \SAut(G),\quad \gamma \mapsto \sigma_\gamma\hs, \]
where each $\sigma_\gamma$ is an $\gamma$-semi-automorphism of $G$.
The $G_\dmd$-equivariant $k_0$-model $Y_\dmd$ of $Y$
defines a homomorphism
\[\mu\colon \Gamma\to\SAut(Y),\quad \gamma\mapsto\mu_\gamma\]
such that each $\mu_\gamma$ is an $\gamma$-semi-automorphism of $Y$
and is $\sigma_\gamma$-equivariant, that is,
\begin{equation}\label{e:mu-g-y}
\mu_\gamma(g\cdot y)=\sigma_\gamma(g)\cdot \mu_\gamma(y)\quad\text{for all }g\in G(k),\ y\in Y(k).
\end{equation}
Since the map $\gamma\mapsto\mu_\gamma$ is a homomorphism, we have
\begin{equation}\label{e:action-hom}
\mu_{\gamma t}=\mu_\gamma\circ\mu_t\quad\text{for all }\gamma,t\in\Gamma.
\end{equation}

We lift the 1-cocycle
\[c\colon\Gamma\to \Gbar(k)\]
to a locally constant map
\[\ctil\colon \Gamma\to G(k),\]
which does not have to be a 1-cocycle.
Let $\sigma^0\colon \Gamma\to\SAut(G)$ denote the homomorphism corresponding to the twisted form $G_0=\hs_cG_\dmd$,
then by definition
\[\sigma^0_\gamma(g)=\ctil_\gamma\cdot\sigma_\gamma(g)\cdot\ctil_\gamma^{-1}.\]

For $g\in G(k)$, we write $l(g)$ for the automorphism $y\mapsto g\cdot y$ of $Y$.
We have
\begin{equation}\label{e:g-a}
l(g)\circ a=a\circ l(g)\quad\text{for all } g\in G(k),\ a\in\sA_\dmd\hs,
\end{equation}
because $a$ is a $G$-equivariant automorphism of $Y$.
By \eqref{e:mu-g-y} we have $\mu_s(g\cdot y)=\sigma_s(g)\cdot \mu_s(y)$, hence
\begin{equation}\label{e:mu-g}
\mu_s\circ l(g)=l(\sigma_s(g))\circ\mu_s\quad\text{for all }s\in\Gamma,\ g\in G_\dmd(k).
\end{equation}
Similarly, $\tau_s(a)(\mu_s(y))=\mu_s(a(y))$, hence,
\begin{equation}\label{e:mu-a}
\mu_s\circ a=\tau_s(a)\circ\mu_s\quad\text{for all }s\in\Gamma,\ a\in \sA_\dmd\hs.
\end{equation}

By definition (Serre \cite[I.5.6]{Serre})
\[\delta[c]\in H^2(k_0,Z(G_\dmd))\]
is the class of the 2-cocycle given by
\[(\gamma,t)\mapsto \ctil_\gamma\cdot\upgam \ctil_t\cdot \ctil_{\gamma t}^{-1}\ \in Z(G_\dmd)(k) \quad (\gamma,t\in\Gamma).\]
Then $\vk_*(\delta[c])$ is the class of the 2-cocycle
\[(\gamma,t)\mapsto \vk( \ctil_\gamma\cdot\upgam \ctil_t\cdot \ctil_{\gamma t}^{-1})\in \sA_\dmd.\]

Let
\[ a\colon \Gamma\to\sA_\dmd\]
be a locally constant map.
We define
\[\mu^0_\gamma=a_\gamma\circ l(\ctil_\gamma)\circ\mu_\gamma=l(\ctil_\gamma)\circ a_\gamma\circ\mu_\gamma.\]

\begin{lemma}\label{l:equi}
For any $\gamma\in \Gamma$, the $\gamma$-semi-automorphism $\mu^0_\gamma$ is $\sigma^0_\gamma$-equivariant.
\end{lemma}

\begin{proof}
 Using \eqref{e:g-a} and  \eqref{e:mu-g}, we compute:
\begin{align*}
\mu^0_\gamma(g\cdot y)
&=(a_\gamma\circ l(\ctil_\gamma))(\mu_\gamma(g\cdot y))\\
&= a_\gamma( \ctil_\gamma\cdot\sigma_\gamma(g)\cdot\mu_\gamma(y)\hs )\\
&=\ctil_\gamma\hs \sigma_\gamma(g)\ctil_\gamma^{-1}\cdot a_\gamma(\hs \ctil_\gamma\cdot \mu_\gamma(y)\hs)
=\sigma^0_\gamma(g)\cdot\mu^0_\sigma(y).\qedhere
\end{align*}
\end{proof}

\begin{lemma}\label{l:hom}
The map $\gamma\to\mu^0_\gamma$ is a homomorphism
if and only if
\begin{equation}\label{e:neutral}
a_\gamma\cdot \upgam a_t\cdot\vk( \ctil_\gamma\upgam \ctil_t\,
            \ctil_{\gamma t}^{-1})\cdot a_{\gamma t}^{-1}=1  \quad\text{for all }\gamma,t\in\Gamma.
\end{equation}
\end{lemma}

\begin{proof}
Let $\gamma,t\in\Gamma$.
 Using \eqref{e:g-a}, \eqref{e:mu-g}, and \eqref{e:mu-a},  we compute:
\begin{align*}
\mu^0_\gamma\circ\mu^0_t \circ (\mu^0_{\gamma t})^{-1}
&=a_\gamma\circ l(\ctil_\gamma)\circ\mu_\gamma \circ a_t\circ l(\ctil_t)\circ\mu_t
\circ\mu_{\gamma t}^{-1}\circ l(\ctil_{\gamma t})^{-1}\circ a_{\gamma t}^{-1}\\
&=a_\gamma\circ \tau_\gamma(a_t)\circ l(\ctil_\gamma)\circ l(\sigma_\gamma(\ctil_t))
\circ\mu_\gamma\circ\mu_t\circ\mu_{\gamma t}^{-1}\circ l(\ctil_{\gamma t})^{-1}\circ a_{\gamma t}^{-1}.
\end{align*}
By \eqref{e:action-hom}  we obtain that
\begin{align*}
\mu^0_\gamma\circ\mu^0_t \circ (\mu^0_{\gamma t})^{-1}
&=a_\gamma\circ \tau_\gamma(a_t)\circ l(\ctil_\gamma)\circ l(\sigma_\gamma(\ctil_t))
\circ l(\ctil_{\gamma t})^{-1}\circ a_{\gamma t}^{-1}\\
&=a_\gamma\cdot \upgam a_t\cdot \vk(\ctil_\gamma \hs \upgam\ctil_t\,\ctil_{\gamma t}^{-1})\cdot a_{\gamma t}^{-1}.
\end{align*}
We see that
\[\mu^0_\gamma\circ\mu^0_t \circ (\mu^0_{\gamma t})^{-1}=1\]
if and only if \eqref{e:neutral} holds.
Thus the map $\gamma\to\mu^0_\gamma$ is a homomorphism if and only if \eqref{e:neutral} holds,
which completes the proof of Lemma \ref{l:hom}.
\end{proof}

Now assume that $\vk_*(\delta[c])\in H^2(\Gamma,\sA_\dmd)$ is neutral.
This means that there exists a locally constant map
\[a\colon \Gamma\to \sA_\dmd\]
such that \eqref{e:neutral} holds.
Then by Lemma \ref{l:hom}  the map
\[\mu^0\colon \Gamma\to\SAut(Y),\quad \gamma\mapsto\mu^0_\gamma\]
is a homomorphism, hence it satisfies hypothesis (i) of \cite{BG}, Lemma 6.3.
By Lemma \ref{l:equi} $\mu^0_s$ is $\sigma^0_s$-equivariant, hence
$\mu^0$ satisfies hypothesis (iv) of \cite{BG}, Lemma 6.3.
The variety $Y=G/H$ is quasi-projective, hence hypothesis (iii) of this lemma is satisfied.
It is easy to see that the restriction of the homomorphism $\mu_0$
to $\Gal(k/k_1)$ for some finite Galois extension $k_1/k_0$ in
$k$ comes from a $G_1$-equivariant $k_1$-model $Y_1$ of $Y$, where $G_1 = G_0 \times_{k_0} k_1$.
Thus the homomorphism $\mu^0$ satisfies hypothesis (ii) of  \cite{BG}, Lemma 6.3.
By this lemma the variety $Y$ admits a $G_0$-equivariant $k_0$-model $Y_0$ inducing the homomorphism $\gamma\mapsto\mu^0_\gamma$, as required.

Conversely, assume that $Y$ admits a $G_0$-equivariant $k_0$-model $Y_0$ inducing a homomorphism
\[\mu^0\colon\Gamma\to \SAut(G),\quad \gamma\mapsto\mu^0_\gamma.\]
Then by Lemma \ref{l:equi} (in the case $a_s=1$) the $\gamma$-semi-automorphism $l(c_\gamma)\circ\mu_\gamma$ of $Y$
is $\sigma^0_\gamma$-equivariant for any $\gamma\in\Gamma$.
Since $\mu^0_\gamma$ is $\sigma^0_\gamma$-equivariant as well, we have
\[\mu^0_\gamma=a_\gamma\circ l(c_\gamma)\circ\mu_\gamma\]
for some  locally constant map
\[a\colon \Gamma\to \sA_\dmd,\quad \gamma\mapsto a_\gamma.\]
Since the map $s\mapsto\mu^0_s$ is a homomorphism, by Lemma \ref{l:hom} the equality \eqref{e:neutral} holds
and hence, $\vk_*(\delta[c])$ is neutral in $H^2(\Gamma, \sA_\dmd)$.
This completes the proof of Theorem \ref{t:twist}.
\end{proof}

\section{Model of a homogeneous space of a reductive group}
\label{s:tits}

Let $k$, $k_0$, and $\Gamma$ be as in Subsection \ref{ss:intro-models}.
In this section $G$ is a connected reductive group over $k$.
Let $H\subset G$ be a $k$-subgroup (not necessarily spherical).
We consider the homogeneous $G$-variety $Y=G/H$.
Consider the abstract group $\sA=\Aut^G(G/H)$
and the algebraic group $A=\sN_G(H)/H$, then there is a canonical isomorphism $A(k)\isoto\sA$;
see e.g. \cite[Lemma 5.1]{BG}.
Let $G_\qs$ be a quasi-split $k_0$-model of $G$ and let $Y_\qs$ be a $G_\qs$-equivariant model of $G/H$,
then we obtain a $\Gamma$-action on $A(k)=\sA$ and hence, a $k_0$-model $A_\qs$ of $A$.
We need the following result:

\begin{proposition}\label{p:inn-qs}
Let $k_0\subset k$ be a subfield such that $k$ is a Galois extension of $k$.
Let $G$ be a connected reductive group over $k$.
Let $G_0$ be any $k_0$-model of $G$.
Then there exist a quasi-split inner $k_0$-model $G_\qs$ of $G$
 and a cocycle $c\in Z^1(k_0,\Inn(G_\qs)\hs)$ such that
$G_0\simeq \hs_c G_\qs$ (we say that $G_\qs$ is a quasi-split inner $k_0$-form of $G_0$).
Moreover, if $G_\qs$ and $G'_\qs$ are two quasi-split inner $k_0$-forms of $G_0$, then they are isomorphic.
\end{proposition}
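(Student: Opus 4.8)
The plan is to prove the two assertions of Proposition \ref{p:inn-qs} separately: first the \emph{existence} of a quasi-split inner form $G_\qs$ with $G_0 \simeq\hs_c G_\qs$, and then its \emph{uniqueness} up to isomorphism. For existence, I would invoke the well-known structure theory already cited in Subsection \ref{ss:intro-qs}, namely Springer \cite[Proposition 16.4.9]{Springer2}: every $k_0$-model $G_0$ of a connected reductive $G$ is an inner form of a quasi-split model. Concretely, the model $G_0$ defines a semilinear action $\sigma^0 \colon \Gamma \to \SAut(G)$, and hence a $\Gamma$-action on the Dynkin diagram of $G$ together with the induced $*$-action on the based root datum. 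One passes to the quasi-split form $G_\qs$ having the \emph{same} $*$-action: this is constructed by taking the semilinear action $\sigma^{\qs}$ that permutes a fixed pinning (épinglage) of $G$ according to this $*$-action. The point is that $\sigma^0_\gamma$ and $\sigma^{\qs}_\gamma$ induce the same action on the outer automorphism group $\Aut(G)/\Inn(G) = \Out(G)$, so the map $\gamma \mapsto c_\gamma := \sigma^0_\gamma \circ (\sigma^{\qs}_\gamma)^{-1}$ takes values in $\Inn(G) = \Gbar(k)$ and, by the cocycle identities for $\sigma^0$ and $\sigma^{\qs}$, satisfies the twisted cocycle condition \eqref{e:cocycle-cond} relative to the action defined by $G_\qs$. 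Thus $c \in Z^1(k_0, \Inn(G_\qs))$ and $\sigma^0 = c \cdot \sigma^{\qs}$, which is precisely the statement $G_0 \simeq\hs_c G_\qs$ in the sense of Subsection \ref{ss:tw}.

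For the uniqueness assertion, suppose $G_\qs$ and $G'_\qs$ are two quasi-split inner $k_0$-forms of $G_0$. The key observation is that two quasi-split models of $G$ are isomorphic over $k_0$ precisely when they have the same $*$-action of $\Gamma$ on the based root datum; this is the classification of quasi-split forms, for which a quasi-split group is determined up to $k_0$-isomorphism by its action on the Dynkin diagram together with the pinning. So it suffices to check that $G_\qs$ and $G'_\qs$ induce the same $*$-action. Both are inner twists of $G_0$, hence their semilinear actions differ from $\sigma^0$ by inner (cocycle) corrections, and inner automorphisms act trivially on $\Out(G)$ and on the based root datum. Therefore $G_\qs$, $G'_\qs$, and $G_0$ all induce \emph{the same} $*$-action, whence $G_\qs \simeq G'_\qs$.

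The step I expect to require the most care is making precise the passage through $\Out(G)$ and the claim that the correcting map $c$ lands in the \emph{inner} automorphisms and is a genuine $1$-cocycle for the twisted $\Gamma$-action. One must verify that the $*$-action is truly an invariant of the $\Gamma$-action modulo inner automorphisms (so that it depends only on the image in $\Out(G)$), and that the quasi-split form realizing a given $*$-action exists and is unique up to $k_0$-isomorphism — this is where the pinning (a $\Gamma$-stable épinglage) does the real work, since it rigidifies the group and kills the inner ambiguity. Here I would lean on the standard references for quasi-split forms and Galois descent of pinnings rather than reprove them. The cocycle bookkeeping itself is routine once the decomposition $\Aut(G) = \Inn(G) \rtimes \Aut(G, \text{pinning})$ is in hand: the factor $\Aut(G,\text{pinning}) \cong \Out(G)$ supplies $\sigma^{\qs}$, and the complementary factor $\Inn(G)$ supplies the cocycle $c$.
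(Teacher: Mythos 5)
Your argument is correct and is exactly the standard pinning/$*$-action proof contained in the references that the paper itself cites for this proposition (KMRT, Proposition (31.5); Conrad, Proposition 7.2.12; Springer, Proposition 16.4.9) --- the paper gives no proof beyond these citations. The only point to watch is that for a general (non-semisimple) reductive $G$ the relevant invariant is the $\Gamma$-action on the full based root datum rather than just the Dynkin diagram, but your appeal to the splitting $\Aut(G)=\Inn(G)\rtimes\Aut(G,\text{pinning})$ handles this correctly.
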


\begin{proof}
See ``The Book of Involutions'' \cite[Proposition(31.5)]{KMRT}, or Conrad \cite[Proposition 7.2.12]{Conrad},
 the existence only in  Springer \cite[Proposition 16.4.9]{Springer2}.
\end{proof}

\begin{subsec}\label{ss:Tits}
Let $G$ be a connected reductive group over $k$, and let $G_0$ be a $k_0$-model of $G$.
Write $\Gbar=G/Z(G)$ for the corresponding adjoint group, and $\Gtil$ for the universal cover of
the connected semisimple group $[G,G]$.
By Proposition \ref{p:inn-qs} we may write  $G_0=\hs _c G_\qs$,
where $G_\qs$ is a quasi-split $k_0$-model of $G$ and $c\in Z^1(k_0, G_\qs/Z(G_\qs)\hs)$.
We fix $G_\qs$ and  $c$.
We write $\Gbar_\qs=G_\qs/Z(G_\qs)$.

We write $\Ztil_\qs$ for the center $Z(\Gtil_\qs)$ of the universal cover
$\Gtil_\qs$ of the connected semisimple group $[G_\qs,G_\qs]$.
Similarly, we write $\Ztil_0$ for the center $Z(\Gtil_0)$ of the universal cover
$\Gtil_0$ of the connected semisimple group $[G_0,G_0]$.
The short exact sequence
\[1\to \Ztil_\qs\to \Gtil_\qs \to \Gbar_\qs\to 1\]
induces a cohomology exact sequence
\[H^1(k_0,\Ztil_\qs)\to  H^1(k_0,\Gtil_\qs)\to H^1(k_0,\Gbar_\qs)\labelto{\deltatil} H^2(k_0,\Ztil_\qs).\]
By definition, the \emph{Tits class} $t(\Gtil_0)$  is the image of $[c]\in Z^1(k_0,\Gbar_\qs)$ in $H^2(k,\Ztil_\qs)$
under the connecting map $\deltatil\colon H^1(k_0,\Gbar_\qs)\to H^2(k_0,\Ztil_\qs)$;
compare \cite{KMRT}, Section 31, before Proposition (31.7).
\end{subsec}

\begin{theorem}\label{t:tits}
Let $G$ be a reductive group over an algebraically closed field $k$ of characteristic 0.
Let $H\subset G$ be an algebraic subgroup.
Let $k_0\subset k$ be a subfield such that $k$ is an algebraic closure of $k$.
Let $G_0$ be a $k_0$-model of $G$.
Write $G_0=\hs_c G_\qs$, where $G_\qs$ is a quasi-split inner form of $G_0$ and where $c\in Z^1(k_0,\Gbar_\qs)$.
Assume that $G/H$ admits a $G_\qs$-equivariant $k_0$-model.
Then $G/H$ admits a $G_0$-equivariant $k_0$-model if and only if
the image in $H^2(k_0,A_\qs)$ of the Tits class $t(\Gtil_0)\in H^2(k_0,Z(\Gtil_\qs))$ is neutral.
\end{theorem}

\begin{proof}
By Theorem \ref{t:twist} the homogeneous variety $G/H$ admits a $G_0$-equivariant $k_0$-form
if and only if the image
\[\vk(\delta[c])\in H^2(k_0,A_\qs)\]
is neutral.
We write  $Z_\qs$ for $Z(G_\qs)$ and  $\Ztil_\qs$ for $Z(\Gtil_\qs)$.
From the commutative diagram with exact rows
\[
\xymatrix{
1 \ar[r] & \Ztil_\qs \ar[r]\ar[d] & \Gtil_\qs \ar[r]\ar[d] & \Gbar_\qs \ar[r]\ar[d]^{\id} & 1 \\
1 \ar[r] & Z_\qs \ar[r]           &  G_\qs \ar[r]          & \Gbar_\qs \ar[r]             & 1
}
\]
we obtain a commutative diagram
\[
\xymatrix{
H^1(k_0,\Gbar_\qs)\ar[r]^{\deltatil} \ar[d]_{\id}  & H^2(k_0, \Ztil_\qs)\ar[d]^\lambda \\
H^1(k_0,\Gbar_\qs)\ar[r]^\delta                                   & H^2(k_0, Z_\qs),
}
\]
which shows that
\[ \delta[c]=\lambda(\deltatil[c]).\]
By definition
\[t(\Gtil_0)=\deltatil[c]\in  H^2(k_0,\Ztil_\qs).\]
Thus $\vk(\delta[c])$ is the image in $H^2(k_0,A_\qs)$ of $t(\Gtil_0)$
under the map
 \[ H^2(k_0,\Ztil_\qs)\to H^2(k_0,Z_\qs)\to  H^2(k_0,A_\qs)\]
  induced by the homomorphism
$\Ztil_\qs\to Z_\qs\to A_\qs$\hs.
We conclude that the homogeneous variety $G/H$ admits a $G_0$-equivariant $k_0$-form if and only if
the image  of  $t(\Gtil_0)$ in $H^2(k_0,A_\qs)$ is neutral, as required.
\end{proof}

\section{Models of  $(H\times H)/\Delta$} \label{s:H x H}

\begin{subsec}\label{ss:H-times-H/Delta}
Let $H$ be a connected algebraic $k$-group, and set $G=H\times_k H$.
Let $Y=H$, where $G$ acts on $Y$ by
\[ (h_1,h_2)*y=h_1\hs y \hs h_2^{-1}.\]
Note that $Y=G/\Delta$, where $\Delta\in H\times_k H$ is the diagonal,
that is, $\Delta$ is $H$ embedded in $G$ diagonally.
Let $H^{(1)}_0$ and $H^{(2)}_0$ be two $k_0$-models of $H$.
We set $G_0=H^{(1)}_0\times_{k_0} H^{(2)}_0$ and ask whether $Y$ admits a $G_0$-equivariant $k_0$-model.
\end{subsec}

\begin{theorem}\label{t:H-times-H}
With the notation and assumptions of \ref{ss:H-times-H/Delta},
$Y=(H\times H)/\Delta$ admits an $H^{(1)}_0\times_{k_0} H^{(2)}_0$-equivariant $k_0$-model
if and only if $H^{(2)}_0$ is a pure inner form of $H^{(1)}_0$.
\end{theorem}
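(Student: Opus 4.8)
The plan is to apply Theorem~\ref{t:twist} to the (\emph{sic!}) $k_0$-model $G_\dmd = H^{(1)}_0 \times_{k_0} H^{(1)}_0$ of $G = H\times_k H$ together with the $G_\dmd$-equivariant $k_0$-model $Y_\dmd = H^{(1)}_0$ of $Y = H$, on which $G_\dmd$ acts by $(h_1,h_2)*y = h_1 y h_2^{-1}$ over $k_0$. Since $Y=H$ is quasi-projective, the hypotheses of Theorem~\ref{t:twist} are met as soon as $G_0$ is an inner form of $G_\dmd$. First I would make the group $\sA = \Aut^G(Y)$ explicit. A direct normalizer computation shows that $(a,b)\in H\times H$ normalizes $\Delta$ exactly when $b^{-1}a\in Z(H)$, so that $\sN_G(\Delta)\cong Z(H)\times H$ and hence $A = \sN_G(\Delta)/\Delta \cong Z(H)$; concretely the $G$-equivariant automorphism of $Y=H$ attached to $z\in Z(H)$ is $y\mapsto zy$. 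In particular $\sA$ is abelian. Transporting the Galois actions through $Y_\dmd = H^{(1)}_0$ identifies $\sA_\dmd$ with $Z(H^{(1)}_0)$ and $Z_\dmd = Z(G_\dmd)$ with $Z(H^{(1)}_0)\times Z(H^{(1)}_0)$, and under these identifications the canonical homomorphism $\vk\colon Z_\dmd\to\sA_\dmd$ becomes the $\Gamma$-equivariant map $(z_1,z_2)\mapsto z_1 z_2^{-1}$.

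For the ``if'' direction I would argue directly with Lemma~\ref{l:pure-inner}. If $H^{(2)}_0$ is a pure inner form of $H^{(1)}_0$, say $H^{(2)}_0 = {}_{\ctil_2}H^{(1)}_0$ with $\ctil_2\in Z^1(k_0,H^{(1)}_0)$, then $\ctil = (1,\ctil_2)\in Z^1(k_0,G_\dmd)$ exhibits $G_0 = H^{(1)}_0\times_{k_0}H^{(2)}_0$ as a pure inner form ${}_\ctil G_\dmd$ of $G_\dmd$. Lemma~\ref{l:pure-inner} then produces the desired $G_0$-equivariant model $Y_0 = {}_\ctil Y_\dmd$ of $Y$. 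Equivalently, $\delta[\ctil]=1$, so $\vk_*(\delta[\ctil])$ is neutral and Theorem~\ref{t:twist} applies.

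For the ``only if'' direction, suppose $Y$ admits a $G_0$-equivariant $k_0$-model $Y_0$. The crux, and the step I expect to be the main obstacle, is that Theorem~\ref{t:twist} only speaks about inner forms of $G_\dmd$, so I must first show that the mere existence of a model forces $H^{(2)}_0$ to be an inner form of $H^{(1)}_0$. Here I would use that $Y_0$ is simultaneously a left $H^{(1)}_0$-torsor $P$ (via $H^{(1)}_0\times 1$) and a right $H^{(2)}_0$-torsor (via $1\times H^{(2)}_0$), with the two actions commuting. Thus $H^{(2)}_0$ maps isomorphically onto the group $\Aut_{H^{(1)}_0}(P)$ of automorphisms of $P$ commuting with the left action, and the standard gauge-group identification $\Aut_{H^{(1)}_0}(P)\cong {}_P H^{(1)}_0$ shows $H^{(2)}_0$ is the pure inner form of $H^{(1)}_0$ defined by $P$ --- which already yields the conclusion. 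Note that this is exactly the point where rational points cannot be assumed, so a stabilizer argument must be replaced by the torsor identification.

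Alternatively, once it is known that $H^{(2)}_0 = {}_{c_2}H^{(1)}_0$ is an inner form, so that $G_0 = {}_c G_\dmd$ with $c = (1,c_2)\in Z^1(k_0,\Gbar_\dmd)$, I would finish cohomologically in the spirit advertised in the introduction: by Theorem~\ref{t:twist} the model $Y_0$ exists iff $\vk_*(\delta[c])$ is neutral. Since $\delta[c] = (1,\delta[c_2])$ in $H^2(k_0,Z(H^{(1)}_0))\times H^2(k_0,Z(H^{(1)}_0))$ and $\vk_*$ is $(\alpha_1,\alpha_2)\mapsto \alpha_1\alpha_2^{-1}$, neutrality --- which for the abelian group $\sA_\dmd$ means triviality --- is equivalent to $\delta[c_2]=1$. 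By the exact sequence~\eqref{e:coh-e-s} this holds iff $[c_2]$ lifts to $H^1(k_0,H^{(1)}_0)$, i.e. iff $H^{(2)}_0$ is a pure inner form of $H^{(1)}_0$, as required.
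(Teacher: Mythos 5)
Your proof is correct, and the ``only if'' direction takes a genuinely different route from the paper. The ``if'' direction is identical: twist by $(1,\ctil_2)$ and invoke Lemma \ref{l:pure-inner} (the paper also spells out the torsor $P_{\ctil}$ as the explicit model). For the converse, you observe that a $G_0$-equivariant model $Y_0$ is simultaneously a left $H^{(1)}_0$-torsor and a right $H^{(2)}_0$-torsor with commuting actions, so the gauge-group identification $\underline{\Aut}_{H^{(1)}_0}(P)\cong{}_P H^{(1)}_0$ exhibits $H^{(2)}_0$ directly as a pure inner form; this settles the whole converse in one step and bypasses Theorem \ref{t:twist} entirely. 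The paper instead proceeds in two stages: Lemma \ref{l:Stab} (conjugacy of the twisted diagonal to $\Delta$) shows $H^{(2)}_0$ is an \emph{inner} form, and then Theorem \ref{t:twist} together with the computation that $Z(H^{(1)}_0)\to A_1=\sN_{G_1}(\Delta_1)/\Delta_1$, $z\mapsto(1,z)Z(\Delta)$, is an isomorphism (your map $(z_1,z_2)\mapsto z_1z_2^{-1}$ in disguise) upgrades ``inner'' to ``pure inner.'' Your bitorsor argument is shorter and uses only Serre's I.5.3 (which the paper already cites for the forward direction), whereas the paper's route has the virtue of illustrating the main Theorem \ref{t:twist}, which is presumably the point of the section. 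One small remark: your stated reason for avoiding a stabilizer argument --- that rational points cannot be assumed --- is not actually an obstacle, since Lemma \ref{l:Stab} works with $k$-points of $Y$ (which exist as $k$ is algebraically closed) and the semilinear automorphisms $\mu_\gamma$ preserve $Y(k)$; still, your replacement is valid and self-contained. Your secondary, cohomological finish coincides with the paper's computation and is also correct.
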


\begin{proof}[Proof of Theorem \ref{t:H-times-H}]
Set $G_1=H^{(1)}_0\times_{k_0} H^{(1)}_0$, then $Y$ admits a $k_0$-model $Y_1= H^{(1)}_0$ (with the natural action of $G_1$).
Assume that $H^{(2)}$ is a pure inner form of $H^{(1)}$, then $G_0:=H^{(1)}_0\times_{k_0} H^{(2)}_0$ is a pure inner form of $G_1$,
and by Lemma \ref{l:pure-inner} the $G$-variety $Y$ admits a $G_0$-equivariant $k_0$-model.
Explicitly,  let $P_\ctil$ denote the \emph{torsor} (principal homogeneous space) of $H^{(1)}$
corresponding to the 1-cocycle $\ctil$; see Serre \cite[Section I.5.2]{Serre}.
Then $H^{(1)}$ acts on $P_\ctil$, and $H(k)=H^{(1)}(k)$ acts on $P$ simply transitively.
Moreover, $H^{(2)}=\hs_\ctil(H^{(1)})$ acts on $P_\ctil$ as well, and these two actions commute;
see Serre  \cite[Section I.5.3, Corollary of Proposition 34]{Serre}.
Thus $P_\ctil$ is an $H^{(1)}_0\times_{k_0} H^{(2)}_0$-equivariant $k_0$-model of $Y$.

Conversely, assume that $Y$ admits an $H^{(1)}_0\times_{k_0} H^{(2)}_0$-equivariant $k_0$-model.
First we show that then $H^{(2)}$ is an \emph{inner form} of $H^{(1)}$.
Indeed, let
\[\sigma^{(i)}\colon \Gamma\to\SAut(H) \]
denote the semilinear actions corresponding to the model $H_0^{(i)}$ of $H$ for $i=1,2$.
Recall that $\Delta(k)=\{(h,h)\ |\ h\in H(k)\}$.
Then for any $\gamma\in \Gamma$ we have
\[(\sigma^{(1)}_\gamma\times\sigma^{(2)}_\gamma)\hs(\Delta(k))=\{(\sigma^{(1)}_\gamma(h), \sigma^{(2)}_\gamma(h))\ |\ h\in H(k)\}.\]
Since $Y$ admits an $H^{(1)}_0\times_{k_0} H^{(2)}_0$-equivariant $k_0$-model,
 the subgroup $(\sigma^{(1)}_\gamma\times\sigma^{(2)}_\gamma)\hs(\Delta)$
is conjugate to $\Delta$ in $G=H\times H$; see, e.g., \cite[Lemma 4.1]{Stephan}.
This means that there exists a pair $(h_1,h_2)\in H(k)\times H(k)$ such that
\[(\sigma^{(1)}_\gamma(h), \sigma^{(2)}_\gamma(h))=(h_1 h h_1^{-1}, h_2 h h_2^{-1})\ \text{ for all }\ h\in H(k).\]
It follows that
\[\sigma^{(1)}_\gamma(h)=(h_1 h_2^{-1})\cdot \sigma^{(2)}_\gamma(h)\cdot(h_1 h_2^{-1})^{-1}.\]
We see that for any $\gamma\in\Gamma$, the $\gamma$-semi-automorphism  $\sigma^{(2)}_\gamma$ of $H$
differs from $\sigma^{(2)}_\gamma$ by an inner automorphism of $H$.
This means that $H^{(2)}_0$ is an inner form of $H^{(1)}_0$.

Now we know that $H^{(2)}_0=\hs_c (H^{(1)}_0)$ for some 1-cocycle $c\in Z^1(k_0,\Hbar^{(1)})$.
Set $G_1=H_0^{(1)}\times_{k_0}H_0^{(1)}$,
then  $Y_1:=H_0^{(1)}$ with the natural action of $G_1$ is a $G_1$-equivariant $k_0$-model of $Y$.
Moreover, $G_0=H^{(1)}_0\times_{k_0} H^{(2)}_0$ is the inner twisted form of $G_1$
given by the 1-cocycle $(1,c)\in Z^1(k_0, G_1)$.
Then
\[ \delta[(1,c)]\ \in\   H^2(k_0, Z(G_1))=H^2(k_0, Z( H_0^{(1)})\hs)\times H^2(k_0, Z(H_0^{(1)})\hs)\]
is $(1,\delta_H[c])$, where
\[\delta_H\colon H^1(k_0,\Hbar_0^{(1)})\to H^2(k_0, Z(H_0^{(1)})\hs)\]
is the connecting map.
By Theorem \ref{t:twist}, $Y$ admits an $H^{(1)}_0\times_{k_0} H^{(2)}_0$-equivariant $k_0$-model
if and only if $\vk_*(\delta[1,c])=0$,
that is,  if and only if $\vk_*(1,\delta_H[c])=1$.
An easy calculation shows that
\[\sN_G(\Delta)=Z(G)\cdot\Delta \quad \text{and}\quad A:=\sN_G(\Delta)/\Delta= Z(G)/Z(\Delta)=(Z(H)\times Z(H)\hs)\hs/\hs Z(\Delta).\]
Similarly, over $k_0$ we obtain
\[\sN_{G_1}(\Delta_1)=Z(G_1)\cdot\Delta_1 \ \text{and}\
A_1:=\sN_{G_1}(\Delta_1)/\Delta_1= Z(G_1)/Z(\Delta_1)=(Z(H^{(1)}_0)\times Z(H^{(1)}_0)\hs)\hs/\hs Z(H^{(1)}_0).\]
It is easy to see that the morphism of abelian $k_0$-groups
\[ Z(H_0^{(1)})\to A_1,\quad z\mapsto (1,z)\cdot Z(\Delta)\]
is an isomorphism.
It follows that the induced map on cohomology
\[ H^2(k_0,Z(H_0^{(1)})\hs)\to H^2(k_0, A_1)\]
is an isomorphism of abelian groups.
Therefore, $Y$ admits an $H^{(1)}_0\times_{k_0} H^{(2)}_0$-equivariant $k_0$-model
if and only if $\delta_H[c]=1$, that is,
if and only if $H^{(2)}_0$ is a pure inner form of $H^{(1)}_0$, as required.
\end{proof}

\begin{lemma}\label{l:Kneser}
Let $H_0$ be a simply connected semisimple group over a $p$-adic field $k_0$.
Then any pure inner form of $H_0$ is isomorphic to $H_0$.
\end{lemma}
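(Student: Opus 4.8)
The statement is an immediate consequence of the vanishing of the first Galois cohomology of a simply connected semisimple group over a $p$-adic field, combined with the formal properties of twisting. The plan is as follows. First I recall the definition: a pure inner form of $H_0$ is a $k_0$-group of the shape $_{\ctil}H_0=\hs_{i\circ\ctil}H_0$, where $\ctil\in Z^1(k_0,H_0)$ is a 1-cocycle \emph{with values in $H_0$} and $i\circ\ctil\in Z^1(k_0,\Hbar_0)$ is its image under the projection $H_0\to\Hbar_0:=H_0/Z(H_0)$ onto the adjoint group (identifying $\Inn(H_0)$ with $\Hbar_0(k)$). Thus it suffices to show that every such $\ctil$ is cohomologous to the trivial cocycle, since twisting by a cocycle representing the trivial class gives back the original group.

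The key input is a theorem of Kneser: for a simply connected semisimple group $H_0$ over a non-archimedean local field $k_0$ of characteristic $0$, the pointed set $H^1(k_0,H_0)$ is trivial; see Serre \cite[III.3.2]{Serre}. Since $k_0$ is a $p$-adic field and $H_0$ is simply connected semisimple, this applies and yields $H^1(k_0,H_0)=1$. Hence $[\ctil]=1$, that is, $\ctil$ is cohomologous in $Z^1(k_0,H_0)$ to the trivial cocycle.

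It remains to transport this triviality along the projection to the adjoint group. The induced map $i_*\colon H^1(k_0,H_0)\to H^1(k_0,\Hbar_0)$ carries $[\ctil]$ to $[i\circ\ctil]$, so from $[\ctil]=1$ we get $[i\circ\ctil]=1$ in $H^1(k_0,\Hbar_0)$ as well. Since twisting by cohomologous cocycles produces isomorphic $k_0$-forms (see Subsection \ref{ss:tw}), I conclude that
\[
_{\ctil}H_0=\hs_{i\circ\ctil}H_0\simeq H_0.
\]
As this holds for an arbitrary pure inner form, the lemma follows.

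The only substantial ingredient is Kneser's vanishing theorem, which I invoke as a black box; every remaining step is a formal consequence of the definition of the twisted form. In particular, the sole thing to verify carefully is that cohomological triviality in $H^1(k_0,H_0)$ pushes forward, via $i_*$, to triviality of the associated adjoint cocycle $i\circ\ctil$ — and this is automatic from functoriality of $H^1$. I therefore expect no genuine obstacle beyond citing the cohomological vanishing correctly.
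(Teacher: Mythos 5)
Your proof is correct and follows the same route as the paper: the paper's one-line argument also just invokes Kneser's vanishing theorem $H^1(k_0,H_0)=1$ (citing Platonov--Rapinchuk rather than Serre) and leaves the formal twisting argument implicit. Your additional spelling-out of why cohomological triviality of $\ctil$ forces $_{\ctil}H_0\simeq H_0$ is exactly the intended content.
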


\begin{proof}
Indeed, by Kneser's theorem   we have $H^1(k_0,H_0)=1$; see Platonov and Rapinchuk  \cite[Theorem 6.4]{PR}.
\end{proof}

\begin{corollary}
In Theorem \ref{t:H-times-H}, if $k_0$ is a $p${\emm-adic} field and $H$ is a
{\emm simply connected} semisimple group over $k$,
then $Y$ admits an $H^{(1)}_0\times_{k_0} H^{(2)}_0$-equivariant $k_0$-model
if and only if $H^{(2)}_0$ is isomorphic to $H^{(1)}_0$.
\end{corollary}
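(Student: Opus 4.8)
The plan is to deduce the corollary directly from Theorem~\ref{t:H-times-H} together with the vanishing theorem of Kneser recorded in Lemma~\ref{l:Kneser}. By Theorem~\ref{t:H-times-H}, the variety $Y=(H\times H)/\Delta$ admits an $H^{(1)}_0\times_{k_0}H^{(2)}_0$-equivariant $k_0$-model if and only if $H^{(2)}_0$ is a pure inner form of $H^{(1)}_0$; so it suffices to show that, under the present hypotheses, the condition ``$H^{(2)}_0$ is a pure inner form of $H^{(1)}_0$'' is equivalent to ``$H^{(2)}_0\simeq H^{(1)}_0$''. Before doing so I would record that Lemma~\ref{l:Kneser} is applicable with $H_0=H^{(1)}_0$: since $H^{(1)}_0\times_{k_0}k\simeq H$ is simply connected semisimple and these properties descend from $k$ to $k_0$, the group $H^{(1)}_0$ is a simply connected semisimple group over the $p$-adic field $k_0$, and hence $H^1(k_0,H^{(1)}_0)=1$.

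For the implication ``pure inner form $\Rightarrow$ isomorphic'', suppose $H^{(2)}_0\simeq{}_{\ctil}H^{(1)}_0$ for some $\ctil\in Z^1(k_0,H^{(1)}_0)$. By Lemma~\ref{l:Kneser} every pure inner form of $H^{(1)}_0$ is isomorphic to $H^{(1)}_0$, so ${}_{\ctil}H^{(1)}_0\simeq H^{(1)}_0$ and therefore $H^{(2)}_0\simeq H^{(1)}_0$. For the converse ``isomorphic $\Rightarrow$ pure inner form'', if $H^{(2)}_0\simeq H^{(1)}_0$ then $H^{(2)}_0$ is isomorphic to the pure inner form ${}_1 H^{(1)}_0=H^{(1)}_0$ attached to the trivial cocycle $1\in Z^1(k_0,H^{(1)}_0)$, so $H^{(2)}_0$ is by definition a pure inner form of $H^{(1)}_0$; this direction uses nothing beyond the definition of a pure inner form.

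I do not expect a genuine obstacle: the whole argument is the reduction supplied by Theorem~\ref{t:H-times-H} combined with the vanishing $H^1(k_0,H^{(1)}_0)=1$. The only point that deserves a word of care is the verification that $H^{(1)}_0$, which a priori is merely a $k_0$-model of the simply connected semisimple $k$-group $H$, is itself simply connected and semisimple over $k_0$, so that Kneser's theorem is indeed available; this is immediate by Galois descent of these structural properties.
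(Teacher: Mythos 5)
Your proof is correct and follows exactly the paper's argument: reduce via Theorem \ref{t:H-times-H} to the question of whether $H^{(2)}_0$ is a pure inner form of $H^{(1)}_0$, then invoke Lemma \ref{l:Kneser} (Kneser's vanishing $H^1(k_0,H^{(1)}_0)=1$) to conclude that every pure inner form is isomorphic to $H^{(1)}_0$. The extra remarks on the trivial converse direction and on descent of ``simply connected semisimple'' from $k$ to $k_0$ are harmless elaborations of points the paper leaves implicit.
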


\begin{proof}
Indeed, by Theorem \ref{t:H-times-H} the variety $Y$
admits an $H^{(1)}_0\times_{k_0} H^{(2)}_0$-equivariant $k_0$-model if and only if
$H^{(2)}_0$ is a pure inner form of $H^{(1)}_0$, and
by Lemma \ref{l:Kneser} any pure inner form of $H^{(1)}_0$ is isomorphic to $H^{(1)}_0$.
\end{proof}

\section{Models of  $G/U$} \label{s:G/U}

\begin{theorem}\label{t:U}
Let $k$ be a fixed algebraic closure of a field $k$ of characteristic 0,
and let $G$ be a connected reductive group over $k$.
Let $B\subset G$ be a Borel subgroup, and write $U$ for the unipotent radical of $B$.
Consider the homogeneous space $Y=G/U$.
Let $G_0$ be a $k_0$-model of $G$.
Then $Y$ admits a $G_0$-equivariant $k_0$-model if and only if $G_0$ is a pure inner form of a quasi-split model of $G$.
\end{theorem}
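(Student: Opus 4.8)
The plan is to reduce the statement to Theorem~\ref{t:twist} and then to show that the homomorphism $\vk_*$ occurring there is injective, so that neutrality of $\vk_*(\delta[c])$ becomes equivalent to $\delta[c]=1$.

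First I would apply Proposition~\ref{p:inn-qs} to write $G_0\simeq {}_c G_\qs$, where $G_\qs$ is a quasi-split inner $k_0$-form of $G_0$ (unique up to isomorphism) and $c\in Z^1(k_0,\Gbar_\qs)$. Since $G_\qs$ is quasi-split, there is a Borel subgroup $B_\qs\subset G_\qs$ defined over $k_0$; writing $U_\qs=R_u(B_\qs)$, the quotient $Y_\qs=G_\qs/U_\qs$ is a $G_\qs$-equivariant $k_0$-model of $Y=G/U$, as explained in~\ref{ss:intro-qs}. Thus we are in the setting of Theorem~\ref{t:twist} with $G_\dmd=G_\qs$ and $Y_\dmd=Y_\qs$ (and $Y=G/U$ is quasi-projective), and that theorem tells us that $Y$ admits a $G_0$-equivariant $k_0$-model if and only if $\vk_*(\delta[c])$ is neutral in $H^2(\Gamma,\sA_\qs)$. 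On the other hand, by the cohomology exact sequence~\eqref{e:coh-e-s}, the form $G_0={}_c G_\qs$ is a pure inner form of $G_\qs$ if and only if $\delta[c]=1$. Hence it suffices to prove that $\vk_*(\delta[c])$ is neutral if and only if $\delta[c]=1$.

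Next I would make $\sA_\qs$ and $\vk$ explicit. As $U$ is a maximal unipotent subgroup of $G$, its normalizer is $\sN_G(U)=B$, so $A=\sN_G(U)/U=B/U$ is (canonically isomorphic to) a maximal torus of $G$; over $k_0$ the corresponding model is the torus $A_\qs=B_\qs/U_\qs$. In particular $\sA_\qs=A_\qs(k)$ is abelian, so ``neutral'' means ``equal to $1$''. A central element $z\in Z(G)$ acts on $G/U$ by $gU\mapsto zgU=gzU$, that is, by right translation by $z$; consequently $\vk\colon Z_\qs\to A_\qs$ is the inclusion $Z(G_\qs)\hookrightarrow B_\qs/U_\qs$ of the center into the maximal torus (note $Z(G_\qs)\cap U_\qs=1$). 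It therefore remains only to show that $\vk_*\colon H^2(k_0,Z_\qs)\to H^2(k_0,A_\qs)$ is injective, for then $\vk_*(\delta[c])=1$ forces $\delta[c]=1$, while the reverse implication is trivial.

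The heart of the argument is this injectivity, which I expect to be the main obstacle. Put $\Tbar_\qs=A_\qs/\vk(Z_\qs)$, a maximal torus of the semisimple adjoint group $\Gbar_\qs=G_\qs/Z(G_\qs)$, and consider the short exact sequence of $k_0$-groups of multiplicative type
\[1\to Z_\qs\to A_\qs\to \Tbar_\qs\to 1.\]
The associated cohomology sequence
\[H^1(k_0,\Tbar_\qs)\labelto{\partial} H^2(k_0,Z_\qs)\labelto{\vk_*} H^2(k_0,A_\qs)\]
gives $\ker\vk_*=\im\partial$, so it is enough to prove $H^1(k_0,\Tbar_\qs)=0$. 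Now the character lattice $X^*(\Tbar_\qs)$ is the root lattice of $\Gbar_\qs$, whose natural $\Z$-basis is the set of simple roots determined by the Borel subgroup $B_\qs$. Since $B_\qs$ is defined over $k_0$, the Galois group $\Gamma$ preserves and permutes this basis (the $*$-action); hence $X^*(\Tbar_\qs)$ is a permutation $\Gamma$-module. It follows that $\Tbar_\qs$ is a quasi-trivial torus, isomorphic to a finite product of Weil restrictions $R_{L/k_0}\G_m$, and therefore $H^1(k_0,\Tbar_\qs)=0$ by Shapiro's lemma together with Hilbert's Theorem~90. Consequently $\partial=0$, the map $\vk_*$ is injective, and the theorem follows. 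The only nonformal input is thus the observation that, for the quasi-split model, the maximal torus of the adjoint quotient is induced because the $*$-action permutes the simple roots; everything else is bookkeeping with Theorem~\ref{t:twist} and the description of $\Aut^G(G/U)$.
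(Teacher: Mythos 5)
Your proposal is correct and follows essentially the same route as the paper: reduce to Theorem~\ref{t:twist} via the quasi-split model $G_\qs/U_\qs$, identify $A_\qs$ with the maximal torus $T_\qs=B_\qs/U_\qs$, and prove injectivity of $\vk_*$ from the vanishing of $H^1(k_0,\Tbar_\qs)$, which holds because the simple roots give a $\Gamma$-stable basis of $\X^*(\Tbar_\qs)$, making $\Tbar_\qs$ quasi-trivial. This is exactly the argument of the paper (its Lemma~\ref{l:H1-qs}), so nothing is missing.
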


\begin{proof}
It is well known that  $G_0$ is an inner form of a quasi-split model $G_\qs$ of $G$;
see  Springer \cite[Proposition 16.4.9]{Springer2}) or ``The Book of Involutions'' \cite[Proposition (31.5)]{KMRT},
or Conrad \cite[Proposition 7.2.12]{Conrad}.
This means that $G_0=\hs_c G_\qs$, where $c\in Z^1(k_0, \Gbar_\qs)$.
Since $G_\qs$ is quasi-split, there exists a Borel subgroup  $B_\qs\subset G_\qs$ (defined over $k_0$).
Set $U_\qs=R_u(B_\qs)$, then $G_\qs/U_\qs$ is a $G_\qs$-equivariant $k_0$-model of $Y=G/U$.
By Theorem \ref{t:twist}, $Y$ admits a $G_0$-equivariant $k_0$-model
if and only if $\vk_*(\delta[c])\subset H^2(k_0,A_\qs)$ vanishes,
where $A_\qs=\sN_G(U_\qs)/U_\qs\cong T_\qs$ and $T_\qs\subset B_\qs$ is a maximal torus.
Note that $\vk\colon Z_\qs\to A_\qs=T_\qs$ is the canonical embedding, where $Z_\qs=Z(G_\qs)$.

We show that the homomorphism
\[\vk_*\colon H^2(k_0,Z_\qs)\to H^2(k_0,T_\qs)\]
is injective.
Indeed, we have a short exact sequence
\[ 1\to Z_\qs\labelto{\vk} T_\qs\to\Tbar_\qs\to 1,\]
which induces a cohomology exact sequence
\[\dots\to H^1(k_0,\Tbar_\qs)\to H^2(k_0,Z_\qs)\labelto{\vk_*} H^2(k_0,T_\qs)\to\dots\]
Since $G_\qs$ is quasi-split, by Lemma \ref{l:H1-qs} below we have $H^1(k_0,\Tbar_\qs)=1$,
hence the homomorphism $\vk_*$ is injective, as required.

We see that $\vk_*(\delta[c])=1$ if and only if $\delta[c]=1$.
Now consider the cohomology exact sequence
\[\dots \to H^1(k_0,G_\qs)\to H^1(k_0,\Gbar_\qs)\labelto{\delta} H^2(k_0, Z_\qs)\to\dots\]
It follows from the construction of the map $\delta$ (see Serre \cite[Section I.5.6]{Serre})
that $\delta[c]=1$ if and only if $c$ can be lifted to a 1-cocycle $\ctil\in Z^1(k_0,G)$,
that is, if and only if $G_0=\hs_c G_\qs$ is a pure inner form of $G_\qs$, as required.

We conclude that $Y$ admits a $G_0$-equivariant $k_0$-model
if and only if $G_0$ is a pure inner form of $G_\qs$.
\end{proof}

\begin{lemma}\label{l:H1-qs}
Let $G_0$ be a quasi-split semisimple group of adjoint type,
 $B_0\subset G_0$ be a Borel subgroup defined over $k_0$, and $T_0\subset B_0$ be a maximal torus.
Then $H^1(k_0, B_0)=H^1(k_0, T_0)=1$.
\end{lemma}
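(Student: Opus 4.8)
The plan is to reduce the entire statement to the single assertion that the maximal torus $T_0$ is an induced (quasi-trivial) torus, from which both vanishing statements follow by routine dévissage.

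First I would treat the torus, which is the substantive point. Since $G_0$ is quasi-split with $B_0$ and $T_0\subset B_0$ all defined over $k_0$, the group $\Gamma$ acts on the character lattice $X^*(T_0)$ preserving the root system, and, because it preserves $B_0$, preserving the set of positive roots; hence it permutes the set $\{\alpha_1,\dots,\alpha_n\}$ of simple roots. As $G_0$ is of adjoint type, $X^*(T_0)$ is the root lattice $Q=\bigoplus_i \Z\alpha_i$, so $X^*(T_0)$ is a \emph{permutation} $\Gamma$-module, with the simple roots forming a permuted $\Z$-basis. Consequently $T_0$ is quasi-trivial, i.e. a finite product of Weil restrictions $R_{k_i/k_0}\G_m$ indexed by the $\Gamma$-orbits of simple roots (with $k_i$ the fixed field of the stabilizer of a representative). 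By Shapiro's lemma together with Hilbert's Theorem 90 one gets $H^1(k_0,R_{k_i/k_0}\G_m)=H^1(k_i,\G_m)=1$, and therefore $H^1(k_0,T_0)=1$.

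It then remains to deduce $H^1(k_0,B_0)=1$. I would invoke the short exact sequence
\[ 1\to U_0\to B_0\to T_0\to 1, \qquad U_0=R_u(B_0). \]
Since $\charr k_0=0$, the connected unipotent group $U_0$ is split, so $H^1(k_0,U_0)=1$ (by dévissage from $H^1(k_0,\G_a)=0$ along the descending central series). The associated exact sequence of pointed sets
\[ H^1(k_0,U_0)\to H^1(k_0,B_0)\to H^1(k_0,T_0) \]
has trivial right-hand term by the previous paragraph, so the preimage of the distinguished element exhausts $H^1(k_0,B_0)$; by exactness this preimage equals the image of $H^1(k_0,U_0)$, which is trivial. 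Hence $H^1(k_0,B_0)=1$.

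The only genuinely delicate step is the first: identifying $X^*(T_0)$ with the root lattice and observing that quasi-splitness turns it into a permutation module, so that $T_0$ is induced. Everything downstream — Shapiro/Hilbert 90 for the torus and the unipotent dévissage together with exactness of the cohomology sequence of pointed sets — is standard. The one point I would state carefully is that the $\Gamma$-action on $X^*(T_0)$ is literally a permutation of the simple roots with no inner twist entering, which is exactly what the hypothesis that $B_0$ and $T_0$ are defined over $k_0$ guarantees.
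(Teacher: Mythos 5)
Your proof is correct and takes essentially the same route as the paper: adjoint type makes the simple roots a $\Z$-basis of the character lattice, quasi-splitness (with $B_0$, $T_0$ over $k_0$) makes $\Gamma$ permute that basis, so $T_0$ is quasi-trivial and $H^1(k_0,T_0)=1$. The paper handles the Borel by citing Sansuc's Lemme 1.13 for the bijection $H^1(k_0,B_0)\isoto H^1(k_0,T_0)$, which is precisely the unipotent d\'evissage you carry out by hand.
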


\begin{proof}
Note that $T_0\simeq B_0/R_u(B_0)$,
which gives a canonical bijection $H^1(k_0,B_0)\isoto H^1(k_0, T_0)$;
see Sansuc  \cite[Lemme 1.13]{Sansuc}.
Since $G_0$ is a group of adjoint type, the set of simple roots
$S=S(G_{0,k},T_{0,k},B_{0,k})$ is a basis of the character group $\X^*(T_{0,k})$;
see Springer \cite[8.1.11]{Springer2}.
Since $B_{0,k}$ is defined over $k_0$, the action of $\Gamma$ on $\X^*(T_{0,k})$ preserves the basis $S$.
In other words,  $\X^*(T_{0,k})$ is a \emph{permutation} $\Gamma$-module,
hence $T_0$ is a \emph{quasi-trivial} $k_0$-torus, and therefore, $H^1(k_0,T_0)=1$;
see Sansuc \cite[Lemme 1.9]{Sansuc}.
\end{proof}

\begin{remark}
In Theorem \ref{t:U} assume that $G$ is a semisimple group of adjoint type.
Then a $k_0$-model of $G/U$, if exists, is unique.
Indeed, then $A_\qs\cong T_\qs$, and by Lemma \ref{l:H1-qs} we have
\[H^1(k_0, A_\qs)=H^1(k_0, T_\qs)=1.\]
\end{remark}

\begin{corollary}
In Theorem \ref{t:U} assume that $k_0$ is a $p$-adic field and that $G$
is semisimple and simply connected.
Then $G/U$ admits a $G_0$-equivariant $k_0$-model if and only if $G_0$ is quasi-split.
\end{corollary}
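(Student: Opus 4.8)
The plan is to read the corollary off Theorem \ref{t:U} once Kneser's vanishing result, recorded in Lemma \ref{l:Kneser}, has been applied to the quasi-split model. First I would invoke Theorem \ref{t:U}, which asserts that $G/U$ admits a $G_0$-equivariant $k_0$-model if and only if $G_0$ is a pure inner form of some quasi-split $k_0$-model $G_\qs$ of $G$. This reduces the statement to the purely group-theoretic claim that, under the present hypotheses, $G_0$ is a pure inner form of a quasi-split model precisely when $G_0$ is itself quasi-split.

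The key observation is that $G_\qs$, being a $k_0$-form of the simply connected semisimple group $G$, is again simply connected and semisimple. Since $k_0$ is $p$-adic, Lemma \ref{l:Kneser} (Kneser's theorem, $H^1(k_0,G_\qs)=1$) then applies with $H_0=G_\qs$ and tells us that every pure inner form of $G_\qs$ is isomorphic to $G_\qs$ itself: the only available twisting class is the trivial one.

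With this in hand the equivalence is immediate in both directions. If $G_0$ is quasi-split, then $G_0\cong G_\qs$ is a pure inner form of a quasi-split model via the trivial cocycle, so a model exists. Conversely, if $G_0$ is a pure inner form of a quasi-split $G_\qs$, the previous step forces $G_0\cong G_\qs$, hence $G_0$ is quasi-split. Combined with Theorem \ref{t:U}, this gives the corollary.

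I do not expect a genuine obstacle here, since the statement is a direct corollary; the one point I would be careful to flag is the distinction between \emph{pure} inner forms and arbitrary inner forms. Over a $p$-adic field a simply connected group can admit non-quasi-split inner forms, such as $\SL_1(D)$ for a division algebra $D$, but these are not \emph{pure} inner forms precisely because $H^1(k_0,G_\qs)=1$ kills every pure inner twist. Thus it is essential that Theorem \ref{t:U} characterizes the existence of a model through pure inner forms, which is exactly what makes the vanishing of $H^1(k_0,G_\qs)$ decisive.
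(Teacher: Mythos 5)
Your proposal is correct and follows exactly the paper's own argument: reduce via Theorem \ref{t:U} to the question of pure inner forms, then use Lemma \ref{l:Kneser} (Kneser's vanishing $H^1(k_0,G_\qs)=1$) to conclude that every pure inner form of the quasi-split model is isomorphic to it. Your additional remarks on why $G_\qs$ is again simply connected and on the pure-versus-arbitrary inner form distinction are accurate but not a departure from the paper's route.
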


\begin{proof}
Indeed, by Theorem \ref{t:U} the $G$-variety $G/U$ admits a $G_0$-equivariant $k_0$-model
if and only if $G_0$ is a pure inner form of a quasi-split group $G_\qs$.
Since $k_0$ is a $p$-adic field, by Lemma \ref{l:Kneser} then $G_0$ is isomorphic to $G_\qs$.
\end{proof}


\begin{thebibliography}{20}

\bibitem
{Akhiezer}
Dmitri Akhiezer,  {\em  Satake diagrams and real structures on spherical varieties,}
Internat. J. Math. 26 (2015), no. 12, 1550103, 13 pp.

\bibitem
{ACF}
Dmitri Akhiezer and St\'ephanie Cupit-Foutou,
{\em On the canonical real structure on wonderful varieties,}
J. reine angew. Math., 693 (2014), 231--244.



\bibitem
{Borel-Serre}
Armand Borel et  Jean-Pierre Serre,
 {\em Th\'eor\`emes de finitude
en cohomologie galoisienne,} Comm. Math. Helv., 39 (1964), 111--164.


\bibitem%
{BG}
Mikhail Borovoi with an appendix by Giuliano Gagliardi,
{\em Equivariant models of spherical varieties,}
\url{arXiv:1710.02471 [math.AG]}.



\bibitem%
{Conrad}
Brian Conrad,
{\em Reductive group schemes,} in: Autour des sch\'emas en groupes,
Vol. I, 93--444, Panor. Synth\`eses, 42/43, Soc. Math. France, Paris, 2014.





\bibitem%
{Jahnel}
J\"org Jahnel,
{\em The Brauer-Severi variety associated with a central simple algebra,}
Linear Algebraic Groups and Related Structures 52 (2000), 1-60.


\bibitem%
{KMRT}
Max-Albert Knus, Alexander Merkurjev, Markus Rost, and Jean-Pierre Tignol,
{\em The Book of Involutions,}
 American Mathematical Society Colloquium Publications, 44. American Mathematical Society, Providence, RI, 1998.


\bibitem{PR}
Vladimir Platonov and Andrei Rapinchuk,
{\em Algebraic groups and number theory},
Pure and Applied Mathematics, 139, Academic Press, Boston, MA, 1994.


\bibitem{Sansuc}
Jean-Jacques Sansuc,
{\em  Groupe de Brauer et arithm\'etique des groupes alg\'ebriques
  lin\'eaires sur un corps de nombres,}
J. reine angew. Math., 327  (1981), 12--80.


\bibitem%
{Serre0}
Jean-Pierre Serre, {\em Algebraic groups and class fields,}
Graduate Texts in Mathematics, 117, Springer-Verlag, New York, 1988.


\bibitem
{Serre}
Jean-Pierre Serre, {\em Galois cohomology,} Springer-Verlag, Berlin, 1997.

\bibitem
{Stephan}
Stephan Snigerov, {\em Spherical varieties over large fields,} \url{arXiv:1805.01871 [math.AG]}.

\bibitem%
{Springer-H2}
Tonny A. Springer,
{\em Nonabelian $H^2$ in Galois cohomology,}
(Proc. Sympos. Pure Math. 9, Boulder, Colo., 1965) pp. 164--182, Amer. Math. Soc., Providence, R.I., 1966.




\bibitem
{Springer2}
Tonny A. Springer,
{\em Linear algebraic groups,}
Second edition. Progress in Mathematics, 9. Birkhäuser Boston, Inc., Boston, MA, 1998.



 \end{thebibliography}
\end{document}